\theoremstyle{plain}
\newtheorem{thm}{Theorem}[section]
\newtheorem{prop}[thm]{Proposition}
\theoremstyle{definition}
\newtheorem{defn}[thm]{Definition}
\theoremstyle{remark}
\newtheorem{rem}[thm]{Remark}
\def\z{\mathbf{Z}}
\def\cc{{\mathbf G}_{\mathbf m}}
\def\n{\mathbf{N}}
\def\L{\mathcal{L}}
\def\O{\mathcal{O}}
\def\V{\mathcal{V}}
\def\Pic{{\rm Pic}}
\def\P{\mathcal{P}}
\def\W{\mathcal{W}}
\def\Ker{{\rm Ker}}
\def\g{{\mathfrak g}}
\def\IP{{\mathbf P}}
\def\F{\mathcal{F}}
\def\E{\mathcal{E}}
\def\:{\colon}
\def\ox{\otimes}
\def\red{\text{\rm red}}
\title{Continuous linear series}
\author[Esteves]{Eduardo Esteves}
\address{Instituto de Matem\'atica Pura e Aplicada, IMPA, Rio de Janeiro, RJ\\Brazil}
\email{esteves@impa.br}
\author[Nigro]{Antonio Nigro}
\address{Universidade Federal Fluminense, Instituto de Matem\'atica e Estat\'istica, Niter\'oi, RJ, Brazil}
\email{antonio.nigro@gmail.com}
\author[Rizzo]{Pedro  Rizzo}
\address{Instituto de Matemáticas\\Universidad de Antioquia\\ Medell\'in, Colombia}
\email{pedro.hernandez@udea.edu.co }
\keywords{(Limit) Linear series on Curves, Hilbert Schemes, Moduli Problems, Compactification}
\date{}
\begin{document}

\begin{abstract}
We parameterize by a fine moduli space all degenerations of linear series to a singular curve which is the union of two smooth components meeting transversally at a single point. For this we introduce a novel object in the study of degenerations of linear series, which is the \emph{continuous linear series}. Our moduli space can be regarded as a Hilbert quotient, in the terminology introduced by Kapranov, and is a new compactification of Osserman moduli space of exact limit linear series, and consequently, of Eisenbud and Harris moduli space of refined limit linear series on the curve.
\end{abstract}
\subjclass[2020]{14H10, 14C05, 14D22}
\maketitle

\section{Introduction.}
Fix a singular curve $X$ consisting of two smooth components $Y$ and $Z$ meeting transversally at a point $P$. A \emph{generalized linear series} on $X$ is a space of sections of a torsion-free, rank-1 sheaf on $X$. For short, a \emph{continuous linear series} on $X$ is a \emph{maximal primitive} (definitions given later) family of generalized linear series on $X$ parameterized by an Artin stack which is the quotient of a two-punctured semistable chain of rational curves modulo an action of the one-dimensional torus $\cc$. We parameterize them by a projective scheme. 

The history of the study and applications of degenerations of linear series goes a long way back, even predating work by the Italian School of Algebraic Geometry of Severi and others from the turn to the 20th Century. The modern study arose from applications to Brill--Noether Theory of the theory of limit linear series developed by Eisenbud and Harris in the 1980's. 

Degenerating linear series along families of smooth curves degenerating to a singular curve have many limits. For limit curves of compact type, Eisenbud and Harris \cite{EH1} took those limits whose restrictions to the components of the limit curve have degree zero but for a component, the focus, and verified the collection of their restrictions to their respective foci is a \emph{limit linear series}. A limit linear series, according to Eisenbud and Harris \cite{EH1}, pp.~345--6, is a collection of linear series of same rank and degree, one on each component of the limit curve, satisfying certain inequalities of order sequences. It is called \emph{refined} if all the inequalities are equalities. Eisenbud and Harris proceeded to produce a moduli space parameterizing refined limit linear series over curves of compact type, \cite{EH1}, Thm.~3.3, p.~354. 
Limit linear series arising from degenerations may not be refined, so their space is only quasi-projective.

Multiple applications of their theory were found to this day; see \cite{EH2,EH3,EH4,Cot,Tar,Cast,Fark1,Fark2} for a sample. On the other hand, multiple efforts (including by the authors) have not yet produced a natural, convenient and convincing generalization of the theory to all nodal curves; see \cite{Ran,Ee,EM,ES,Oss1,Oss2,ENR,ESVI,ESVII} for a sample. Attempts were even made using Tropical Geometry (the connection explained in \cite{OssT}), which have nonetheless led to new applications; see \cite{BN,AB,CDP,JP1,JP2} for a sample.

In the 2000's Ossermann \cite{Oss1} argued for another approach for curves of compact type, which is more functorial and works in positive characteristic. Instead of taking limits of linear series whose multidegrees were focused on a component, he considered the collection of all the limits that had effective multidegrees, and verified it is an \emph{Osserman limit linear series}. Over our curve $X$, an Osserman limit linear series of rank $r$ and degree $d$ is a collection of $d+1$ linear series $\mathfrak g=\{(L^{(0)},V^{(0)}),\dots,(L^{(d)},V^{(d)})\}$ of rank $r$ and degree $d$, with $\deg(L^{(0)}_{|Y})=d$ and, for each $i$,
\begin{enumerate}
    \item $\big(L^{(i)}_{|Y}\,,\, L^{(i)}_{|Z}\big)=\big(L^{(0)}_{|Y}(-iP)\,,\, L^{(0)}_{|Z}(+iP)\big)$;
    \item $\varphi^i(V^{(i)})\subseteq V^{(i+1)}$ and $\varphi_i(V^{(i+1)})\subseteq V^{(i)}$, where $\varphi^i\colon L^{(i)}\to L^{(i+1)}$ and $\varphi_i\colon L^{(i+1)}\to L^{(i)}$ are the unique (modulo scalar) nontrivial maps.
\end{enumerate}
Osserman \cite{Oss1}, Thm.~5.3, p.~1178, went on to produce a projective moduli space $G^{r,\text{Oss}}_d(X)$ parameterizing those $\mathfrak g$. He worked only over our curve $X$, but later \cite{Oss2} extended the theory to all nodal curves.

In \cite{E-Oss} Osserman and the first author showed another advantage of Osserman's approach. They showed how to produce a collection $\mathbf P(\mathfrak g)$ of degree-$d$ divisors of $X$ out of an Osserman limit linear series $\mathfrak g=\{(L^{(0)},V^{(0)}),\dots,(L^{(d)},V^{(d)})\}$ over $X$, and showed it coincides with the collection of limits of divisors if $\mathfrak g$ arises from a degeneration to $X$, as long as $\mathfrak g$ is \emph{exact}, a term coined by Osserman in \cite{Oss1}, Def.~A10, p.~1197, meaning that the induced sequences 
$$
V^{(i)}\to V^{(i+1)}\to V^{(i)}\to V^{(i+1)}
$$
are exact for all $i$; see \cite{E-Oss}, Def.~4.1, p.~83 and Thm.~5.2, p.~90. (Exactness is also necessary by \cite{ENR}, Prop.~5.4, p.~846.)
This does not hold for Eisenbud and Harris limit linear series, unless they are refined, in which case they are essentially exact Osserman limit linear series; see \cite{Oss1}, Cor.~6.8, p.~1189 and Rmk.~6.16, p.~1192. (This correspondence is not true for curves with more components; see \cite{M}.) But exactness is an open condition. It turned out then that Osserman moduli space $G^{r,\text{Oss}}_d(X)$ is projective, but only an open subset of it --- though dense for general $X$ by Liu \cite{Liu} --- parameterizes enough data to determine the limits of divisors.

Degenerating linear series along a degenerating family of curves do yield at the limit exact Osserman limit linear series, as long as the total space of the family is smooth; see \cite{E-Oss}, \S~5, p.~90. For our curve $X$ as limit curve, smoothness is controlled by a single number $\delta$, called the \emph{singularity degree} at $P$, smoothness corresponding to $\delta=1$. We \cite{ENR} have realized that one could replace a nonexact limit linear series by an exact one, by passing to the semistable reduction of the family, albeit one that generalized Osserman limit linear series, by allowing for generalized linear series and for a number of them in the collection depending on $\delta$. Following Osserman, we \cite{ENR}, Prop.~3.2, p.~834, constructed a projective variety $G^r_{d,\delta}(X)$ parameterizing what we termed \emph{level-$\delta$ limit linear series}; see \cite{ENR}, Def.~3.1, p.~833, or Section~\ref{deltalls} for a generalization. Not all of them are exact, and as before only the exact ones yield the right collection of divisors; see \cite{ENR}, Thms.~5.5~and~5.6, p.~846. So, again, we were stuck with a quasi-projective variety parameterizing enough data.

As promised in \cite{ENR}, p.~829, we are able to put together all the exact loci of the varieties $G^r_{d,\delta}(X)$ for all $\delta$, and produce, under appropriate identifications, a projective variety $G^r_d(X)$; see Theorem~\ref{allevels}. We realized that the collection of linear series in a limit linear series $\mathfrak g$ of any level can be put together as a $\cc$-invariant family of generalized linear series over $X$ parameterized by a two-punctured semistable chain of rational curves $(S,N_0,N_{\infty})$ if and only if $\mathfrak g$ is exact, a fact essential in the proof of our Theorem~\ref{allevels}. The two punctures $N_0$ and $N_{\infty}$ are on the initial and final curve of the chain, and by $\cc$-invariant we mean invariant under an action of $\cc$ on $(S,N_0,N_{\infty})$. Of course, such a family corresponds to one parameterized by the quotient stack $[S-\{N_0,N_{\infty}\}/\cc]$. The latter family is \emph{maximal} if it contains linear series with sections on invertible sheaves of all possible nonnegative degrees on $Y$ (and on $Z$) and \emph{primitive} if the family is not the pullback of a family over a chain with less components. It is interesting to observe that $[S-\{N_0,N_{\infty}\}/\cc]$ is the limit of a point! 

We avoid stacks after this introduction. In this paper, the families we consider, which we do call \emph{continuous linear series}, are defined as $\cc$-invariant families of generalized linear series over two-punctured semistable chains of rational curves; see Definition~\ref{inv-const}. The notion of maximality is embedded in Definition~\ref{tgls} of twisted generalized linear series, and primitiveness is everywhere nonconstancy. Definition~\ref{thefunctor} is that of a functor $\mathfrak{G}_d^r(X)$ parameterizing families of continuous linear series on $X$, and Theorem~\ref{funct-equiv} states that the functor is represented by a projective scheme $G^r_d(X)$. Finally, Theorem~\ref{allevels} states that every point on $G^r_d(X)$ can be viewed as a level-$\delta$ limit linear series for some $\delta$.

In Proposition~\ref{betaclass} we show that the chain of rational curves $S$ parameterizing a continuous linear series can be embedded as a $\cc$-invariant subscheme of a certain fiber $H^r_d(X,L)$ of a $\cc$-invariant map $H^r_d(X)\to J$ constructed in Section~\ref{gls}, where $J$ is a component of the Picard scheme of $X$ and $H^r_d(X)$ is a scheme parameterizing generalized linear series. Furthermore, the fiber $H^r_d(X,L)$ is a $\cc$-invariant subscheme of a product $\text{Grass}\big(r+1,W_1\oplus W_2\big)\times T$, where $T$ is the rational chain of $d+1$ rational curves; see Section~\ref{tG}. Were it not for $T$, we would be in the context of work by Giansiracusa and Wu~\cite{GW22}, who generalized seminal work by Kapranov~\cite{Kap-chow, GKZ}. In fact, stripping a level-$\delta$ limit linear series of its sheaves, we are left with a linked sequence of vector spaces, using the terminology in \cite{ENR}, Def.~3.5, p.~835, which can be regarded as a complete colineation; see \cite{Tha} and \cite{VE}. Thaddeus \cite{Tha} showed how the moduli space of complete colineations can be realized as a Chow, Hilbert or Mumford quotient of the action of $\cc$ on a Grassmannian. Our approach is that of constructing a Hilbert quotient, but instead of simply for the action of $\cc$ on the Grassmannian $\text{Grass}\big(r+1,W_1\oplus W_2\big)$, the action on its product by $T$.

The crux of the paper is the proof of Theorem~\ref{chainmaps},
the longest of the paper. Using it, not only do we prove representability for our functor $\mathfrak{G}_d^r(X)$, thus constructing the fine moduli space of continuous linear series, but we show the space is an open and closed subscheme of the Hilbert scheme of $\cc$-invariant subschemes of appropriate Hilbert polynomial of $H^r_d(X)$, our Theorem~\ref{funct-equiv}. 

It might be possible to generalize the approach in this paper to all nodal curves $X$. The space of embedded sheaves $\mathbf R^{\mathbf J^{\mathbf b}}$ in \cite{AEc}, \S~2, would replace $J\times T$, and a (to-be-defined) scheme $\mathbf H^r_d(X)$ fibered by Grassmannians over $\mathbf R^{\mathbf J^{\mathbf b}}$ parameterizing generalized linear series would replace $H^r_d(X)$. In \cite{AEc}, Thm.~4.7, degenerations of degree-$d$ invertible sheaves are parameterized by certain subschemes $Y^{a,b}_{\ell,\mathfrak m}$ of $\mathbf R^{\mathbf J^{\mathbf b}}$, which must be images of (to-be-defined) subschemes $Z^{a,b}_{\ell,\mathfrak m}$ of $\mathbf H^r_d(X)$ if one considers degenerations of $(r+1)$-dimensional spaces of sections of these sheaves. One should then look into the piece of the Hilbert scheme of $\mathbf H^r_d(X)$ parameterizing the $Z^{a,b}_{\ell,\mathfrak m}$. The difficulty is that instead of chains of rational curves, we have the  $Y^{a,b}_{\ell,\mathfrak m}$, which are more generally unions of toric varieties induced by tilings of an Euclidean space by polytopes, as described in \cite{AEb}, \S~4.2. The corresponding subschemes $Z^{a,b}_{\ell,\mathfrak m}$ of $\mathbf H^r_d(X)$ should be as complicated, hence a theme for another paper. 

A logarithmic approach may be possible as well. It might replace handling the toric varieties in the approach delineated above. Carocci has recently announced at the Isaac Newton Institute joint work with Battistella and Wise which looks like a log version of the approach above. On the other hand, a discrete approach has recently appeared in work by Amini and the first author \cite{AE1}, which led to applications to canonical series \cite{AE2}.  

This work is based on the doctor thesis by the third author \cite{Rizzo}, where an approach using stable maps rather than Hilbert schemes was developed. We would like to thank Omid Amini, Margarida Melo, Brian Osserman, Marco Pacini, Eduardo Vital and Filippo Viviani for helpful discussions on the subject.

\section{Continuous linear series}\label{sec:stablells}

\textbf{Fix} an algebraically closed field $k$. All our schemes will be $k$-schemes. For each scheme $T$, denote by $T(k)$ its set of $k$-points. 

A reduced projective scheme of pure dimension $1$ is called a \emph{curve}. A coherent sheaf $L$ on a curve $X$ is called \emph{torsion-free, rank-one} if its associated points are the generic points of $X$, over which the sheaf has rank 1. Its \emph{degree} is $\chi(L)-\chi(\mathcal O_X)$.

Let $\pi:\mathcal{X}\rightarrow B$ be a flat projective map whose fibers over $k$-points are curves. We call $\pi$ or $\mathcal{X}/B$ a \emph{family of curves}. Given $b\in B(k)$, a subscheme $S\subseteq\mathcal X$ and a coherent sheaf $\mathcal H$ on $\mathcal{X}$, we let $S_b$ denote the intersection of $S$ with the fiber $\mathcal X_b$ of $\pi$ over $b$, and $\mathcal H_b$ the restriction of $\mathcal H$ to the fiber.

A $B$-flat coherent sheaf $\mathcal L$ on $\mathcal{X}$, for short a \emph{sheaf} on $\mathcal X/B$, is called \emph{relatively torsion-free, rank-one on $\mathcal{X}/B$} if its restriction $\mathcal L_b$ to the fiber $\mathcal{X}_b$ is a torsion-free rank-one sheaf on $\mathcal{X}_b$ for each $b\in B(k)$. It is called of (\emph{relative}) \emph{degree} $d$ if $\mathcal L_b$ has degree $d$ for each $b$. An invertible sheaf on $\mathcal X$ is relatively torsion-free, rank-one on $\mathcal{X}/B$.

A \emph{family of generalized linear series on $\mathcal X/B$} is the pair $(\L,\V)$ of a relatively torsion-free, rank-one sheaf $\mathcal L$ on $\mathcal{X}/B$ and a locally free subsheaf 
$\mathcal V\subseteq\pi_*\mathcal L$ such that the composition of the restriction with the base-change map,
$$
\mathcal V_b\to\pi_*(\mathcal L)_b\to H^0(\mathcal X_b,\mathcal L_b),
$$
is injective for each $b\in B(k)$. It is said to have \emph{rank} $r$ if $\mathcal V$ has constant rank $r+1$, and (\emph{relative}) \emph{degree} $d$ if $\mathcal L$ has relative degree $d$. 

If $B=\text{Spec}(k[[t]])$, the data of $\mathcal{X}/B$ and an isomorphism from the closed fiber of $\mathcal{X}/B$ to a curve $X$ is called a {\it regular smoothing of} $X$ if $\mathcal{X}$ is regular and the generic fiber of $\mathcal{X}/B$ is smooth over the field of Laurent series $k\{\{t\}\}$. We will often assume the isomorphism to $X$ is given without mentioning it.

Let ${\mathbf G}_{\mathbf m}:=\text{Spec}(k[t,t^{-1}])$, the one-dimensional torus over $k$. If ${\mathbf G}_{\mathbf m}$ acts on a $k$-scheme $T$, we will let $x\ast t$ denote the action of each $x\in{\mathbf G}_{\mathbf m}(k)$ on 
each $t\in T(k)$.

\textbf{Fix} a curve $X$. \textbf{Assume} $X$ has two irreducible components, denoted $Y$ and $Z$, which intersect at a single point, denoted $P$, and are smooth and intersect transversally, so that $P$ is a node of $X$.

For each $k$-scheme $B$ and integers $i,j$, an invertible sheaf $\mathcal L$ on $X\times B$ will be said to have \emph{relative bidegree $(i,j)$} on $X\times B/B$ if the restrictions of $\mathcal L$ to $Y\times b$ and $Z\times b$ have degrees $i$ and $j$, respectively, for each $b\in B(k)$. 

\subsection{The family of twisters}\label{sec:twisters}

A \emph{chain} (\emph{of rational curves}) over $k$ is a connected curve $T$ whose components are smooth, rational and can be ordered, $T_0,\dots,T_d$, in such a way that $T_i$ intersects $T_j$ only if $|i-j|\leq 1$, and the intersection is transverse, and at a single point. The ordering is unique up to reversing it. The components $T_0$ and $T_d$ are called \emph{extreme}. Given two $k$-points $N_0\in T_0(k)$ and $N_\infty\in T_d(k)$ on the smooth locus of $T$, we call $(T,N_0,N_\infty)$ a \emph{two-pointed chain}. From now on, we will only consider two-pointed chains, and we will refer to them as just chains. The nodes of $T$ and $N_0,N_{\infty}$ are called the \emph{special} points of the chain. For each $i=0,\dots,d$, denote by $T^*_i$ the set of nonspecial points of $T$ on $T_i$ and $T^*=\bigcup T_i^*$. There are actions of $\cc$ on $T$ leaving $T_i$ and the special points of $T$ on each $T_i$ fixed, and making $T_i^*$ a torsor for $i=0,\dots,d$. We call such an action a \emph{chain action}.

\textbf{Fix} a nonnegative integer $d$. \textbf{Fix} a chain $(T,N_0,N_\infty)$ having $d+1$ components. \textbf{Fix} an ordering $T_0,\dots,T_d$ of the components such that $T_i$ intersects $T_j$ only if $|i-j|\leq 1$. For each $i=1,\dots,d$, let
$N_i$ be the intersection of $T_{i-1}$ and $T_{i}$. For convenience, put $N_{d+1}:=N_\infty$. 

For each $i=0,\dots,d$, \textbf{fix} an isomorphism $T_i\xrightarrow{\cong}\IP^1$ such that, denoting by $\xi^i\colon\O_{T_i}^2\to\O_{T_i}(1)$ the tautological quotient of $T_i$, we have that $\xi^i|_{N_i}(\O_{N_i}\oplus 0)=0$ and $\xi^i|_{N_{i+1}}(0\oplus\O_{N_{i+1}})=0$. \textbf{Fix} $E_i\in T_i(k)$ such that $\xi^i|_{E_i}$ is the cokernel of the diagonal $\mathcal O_{E_i}\hookrightarrow\O_{E_i}^2$. There is a unique chain action of $\cc$ on $(T,N_0,N_\infty)$ such that $\xi^i|_{x\ast Q}=\xi^i|_Q(x,1)$, where $(x,1)$ is the natural endomorphism on $\O_{Q}^2$, for each $x\in\cc(k)$ and $Q\in T_i(k)$. \textbf{Fix} this action. Notice that for each $i=0,\dots,d$ and each $Q\in T_i^*(k)$, we have that 
$\lim_{x\to 0}x\ast Q=N_i$, whereas $\lim_{x\to\infty}x\ast Q=N_{i+1}$.

\textbf{Fix} isomorphisms $\O_Y(P)|_P\xrightarrow{\cong}\O_P$ and $\O_Z(P)|_P\xrightarrow{\cong}\O_P$, and from these construct isomorphisms $\O_Y(iP)|_P\cong\O_P$ and $\O_Z(jP)|_P\cong\O_P$ for all integers $i$ and $j$. Using them we obtain a surjection on $X$ for each $i=0,\dots,d$:
\[
\tau^i\colon \O_Y(-iP)\oplus\O_Z(iP) \longrightarrow
\O_Y(-iP)|_P\oplus\O_Z(iP)|_P \xrightarrow{\, \, \cong\, \, } \O_P^2.
\]
Let $\O_X^{(i)}$ be the kernel of $(1,-1)\tau^i$. 
Then $\O_X^{(i)}$ is an invertible sheaf on $X$ whose restriction to $Y$ 
is $\O_Y(-iP)$ and to $Z$ is $\O_Z(iP)$. Let $\tau^i_{T_i}$ be the pullback of
$\tau^i$ and $\xi^i_X$ that of $\xi^i$ to $X\times T_i$.
Identify $P\times T_i=T_i$, and consider
the composition $\xi^i_{X}\tau^i_{T_i}$. It is a surjection of sheaves on
$X\times T_i$; let $\mathcal F_i$ denote its kernel. Identifying $X\times R=X$ for each $R\in T_i(k)$, we see that $\F_i|_{X\times E_i}=\O_X^{(i)}$. Furthermore, 
\[
\mathcal F_i|_{X\times N_i}= \O_Y(-iP)\oplus\O_Z((i-1)P)\quad\text{and}\quad
\mathcal F_i|_{X\times N_{i+1}}=\O_Y(-(i+1)P)\oplus\O_Z(iP).
\]
As $\F_i|_{X\times N_{i+1}}=\F_{i+1}|_{X\times N_{i+1}}$ for $i=0,\dots,d-1$, there is a coherent subsheaf 
\[
\mathcal F\subset p^*_1\big(\O_Y\oplus\O_Z(dP)\big)
\]
with $T$-flat quotient, where $p_1\:X\times T\to X$ is the indicated projection, such that $\mathcal F|_{X\times T_i}=\mathcal F_i$ for each $i=0,\dots,d$. Then $\F$ is a relatively torsion-free sheaf of rank $1$ and degree $0$ on $X\times T/T$. Abstractly, $\F_i|_{X\times T_i^*}\cong\O_X^{(i)}\otimes\O_{T_i^*}$. 

The sheaf $\F$ is invariant under the action of $\cc$: For each $x\in\cc(k)$, each $i=0,\dots,d$ and $Q\in T_i$, we have
\begin{equation}\label{QcQ}
\F|_{X\times(x\ast Q)}=\Ker(\xi^i|_Q(x,1)\tau^i)=(x^{-1},1)\Ker(\xi^i|_Q\tau^i)=(x^{-1},1)\F|_{X\times Q}
\end{equation}
as subsheaves of $\O_Y(-iP)\oplus\O_Z(iP)$.

We say $\mathcal F$ is the \emph{family of twisters} associated to $X$ and $d$, and \textbf{fix} the notation.

\subsection{An alternative construction}

For the sake of completeness we sketch next an alternative construction of the sheaf $\F$ through a degeneration argument. This construction will not be used elsewhere in the paper.

Let $\widehat T$ be the chain obtained from $T$ by
adding an extra rational curve at each end: one,
denoted $T_{-1}$, intersecting $T_0$ transversally
at $N_0$, the other, denoted $T_{d+1}$, intersecting
$T_d$ transversally at $N_{d+1}$. View $T\subset\widehat T$.

Let $B:=\text{Spec}(k[[t]])$. Let $\pi\colon\mathcal{X}\rightarrow B$ (resp.~$\tau:\mathcal{T}\rightarrow B$) be a regular smoothing of $X$ (resp.~$\widehat T$). Such maps exist, as the universal deformation space of a nodal curve is regular. Form the threefold
$\mathcal{X}\times_B\mathcal{T}$. It is a
smoothing of the surface $X\times\widehat T$. However,
it fails to be regular exactly at $(P,N_i)$
for $i=0,\dots, d+1$. Following the ideas in ~\cite[\S~4.2]{CEP}, 
blow up $\mathcal{X}\times_B\mathcal{T}$ successively
along the (strict transforms of)
$Y\times T_{-1}$, $Y\times T_0$, \dots, $Y\times T_d$ in this order.
Denote by $\widetilde{\mathcal X}$ the resulting space,
and by $\mathcal Y_i$ and $\mathcal{Z}_i$ the strict transforms in
$\widetilde{\mathcal X}$ of $Y\times T_i$ and $Z\times T_i$ for each
$i=-1,\dots,d+1$. It follows that $\widetilde{\mathcal X}$
is regular and the $\mathcal Y_i$ and $\mathcal Z_i$ are Cartier divisors of
$\widetilde{\mathcal X}$. Let
$\psi\colon\widetilde{\mathcal X}\to\mathcal{X}\times_B\mathcal{T}$
denote the blowup map. In the sense of ~\cite[Lem.~5.3, p.~2953]{CEP},
the map $\psi$ is a semistable modification of 
each of the two projections of the fibered product
$\mathcal{X}\times_B\mathcal{T}$ onto its factors. In particular, 
the composition of the blowup with any of 
these projections is flat.

Let $C_i:=\psi^{-1}(P,N_i)$ for each $i=0,\dots,d+1$. Then the
$C_i$ are rational smooth curves. 
The strict transforms $\mathcal Y_{i-1}$ and $\mathcal Z_i$ contain $C_i$, 
whereas $\mathcal Y_{i}$ and $\mathcal Z_{i-1}$ intersect $C_i$
transversally at unique and distinct points. We have that 
$(\mathcal Y_j+\mathcal Z_j)\cdot C_i=0$ for each $j=-1,\dots,d+1$. Also, $\mathcal Y_j\cdot C_i=0$ if $j\neq i-1,i$, whereas $\mathcal Y_i\cdot C_i=1$ and $\mathcal Y_{i-1}\cdot C_i=-1$.

Set
$$
\mathcal{G}:=\O_{\widetilde{\mathcal{X}}}
\Big(-\mathcal Y_{-1}+\mathcal Y_1+2\mathcal
Y_2+\cdots+(d+1)\mathcal Y_{d+1}\Big).
$$
Then $\mathcal{G}|_{C_i}\cong\O_{C_i}(1)$ for each
$i=0,\dots,d+1$, and hence the sheaf $\mathcal G$ is $\psi$-admissible, in the sense of \cite[\S~3]{EP}. It follows from ~\cite[Thm. 3.1, p.~63]{EP} that $\psi_{\ast}\mathcal{G}$ is a relatively torsion-free, rank-1 sheaf of degree 0 on
$\mathcal{X}\times_B\mathcal{T}/\mathcal{T}$, whose
formation commutes with base change. The restriction $\psi_{\ast}\mathcal{G}|_{X\times T}$ is isomorphic to $\mathcal F$.

\subsection{Twisted generalized linear series}

\begin{defn}
Let $B$ be a scheme. Define the \emph{category $\mathcal C_B$ of families of chains over $B$} as follows:
\begin{enumerate}
    \item A \emph{family of chains over $B$} is the data of a family of curves $\pi\:\mathcal S\to B$ and sections $\mathcal N_0,\mathcal N_\infty\subset\mathcal S$ of $\pi$ such that $(\mathcal S_b,\mathcal N_{0,b},\mathcal N_{\infty,b})$ is a chain for each $b\in B(k)$. 
    \item A \emph{morphism} from a family of chains $(\pi\colon\mathcal S\to B,\mathcal N_0,\mathcal N_\infty)$ to another, $(\pi'\colon\mathcal S'\to B,\mathcal N'_0,\mathcal N'_\infty)$, is a $B$-morphism $\gamma\:\mathcal S\to\mathcal S'$ such that $\gamma^{-1}(\mathcal N'_i)=\mathcal N_i$ for $i=0,\infty$ and $\gamma_*[\mathcal S_b]=[\mathcal S'_b]$ for each $b\in B(k)$, where $[-]$ is the fundamental class in the Chow group.
\end{enumerate}
\end{defn}

\begin{defn} Let $B$ be a scheme. Let $p_1\: B\times T \to B$ be the projection. Define the \emph{category of families of chain maps to $T$ over $B$} as the category $\mathcal C_B(T)$ of morphisms in $\mathcal C_B$ to $(p_1,B\times N_0,B\times N_{\infty})$. 
\end{defn}

The category $\mathcal C_B(T)$ is the same as the category of maps 
$\gamma=(\gamma_1,\gamma_2)\:\mathcal S\to B\times T$ where
\begin{enumerate}
    \item $(\gamma_1,\gamma_2^{-1}(N_0),\gamma_2^{-1}(N_\infty))$ is a family of chains over $B$,
    \item $\gamma_{2*}[\mathcal S_b]=[T]$ for each $b\in B(k)$,
\end{enumerate}
which we call families of chain maps to $T$ over $B$ as well. In this approach, a morphism from a family of chain maps $\gamma\:\mathcal S\to B\times T$ to another, $\eta\:\mathcal S'\to B\times T$, is simply a $(B\times T)$-morphism $f\:\mathcal S\to\mathcal S'$ such that $f_*[\gamma_1^{-1}(b)]=[\eta_1^{-1}(b)]$ for each $b\in B(k)$.

If $B=\mathrm{Spec}(k)$, a family of chain maps to $T$ over $B$ is simply called a \emph{chain map} to $T$. If $f$ is a morphism from one family of chain maps to $T$ over $B$ to another, we say that the latter is \emph{intermediate} to the former; if $f$ is an isomorphism, we say they are \emph{equivalent}. Given a chain map to $T$, the chain action on the target $(T,N_0,N_\infty)$ lifts to a chain action on the source; the lifting may not be unique.

Let $B$ be a scheme. Let $\gamma=(\gamma_1,\gamma_2)\:\mathcal S\to B\times T$ be a family of chain maps. Let $\L$ be an invertible sheaf on
$X\times B$. Consider the induced maps:
\begin{equation}
\xymatrix{&X\times\mathcal S\ar[dl]_{(1,\gamma_1)}\ar[d]_{p_{\mathcal S}} \ar[dr]^{(1,\gamma_2)}&\\
X\times B & \mathcal S & X\times T,}
\end{equation}
where $p_{\mathcal S}$ is the projection. We will call the sheaf
$$
\L(\gamma):=\L\boxtimes\F:=(1,\gamma_1)^*\L\otimes(1,\gamma_2)^*\F
$$
on $X\times\mathcal S$ the \emph{family of twists of $\L$ along} $\gamma$. Notice that $\L(\gamma)=(1,\gamma)^*\L(1_{B\times T})$.

Notice that an isomorphism $g\:\L\to\L'$ induces an isomorphism $g(\gamma)\:\L(\gamma)\to\L'(\gamma)$ in the natural way. 

\begin{defn}\label{tgls} Let $\gamma\:\mathcal S\to B\times T$ be a family of chain maps. A \emph{family of twisted generalized linear series of degree $d$ and rank $r$ on $X$ along $\gamma$} is a pair $(\L,\V)$ consisting of:
\begin{enumerate}
\item an invertible sheaf $\L$ on $X\times B$ of relative bidegree $(d,0)$ on $X\times B/B$;
\item a subsheaf $\V\subseteq p_{\mathcal S\ast}\L(\gamma)$ such that $(\L(\gamma),\V)$ is a family of generalized linear series of rank $r$ on $X\times\mathcal S/\mathcal S$.
\end{enumerate}
When $B=\text{Spec}(k)$, we say that $(\L,\V)$ is a \emph{twisted generalized linear series} on $X$ along $\gamma$.
\end{defn}

Let $L$ be an invertible sheaf on $X$ of bidegree $(d,0)$. Let $\gamma\:S\to T$ and $\gamma'\:S'\to T$ be chain maps. Let $(L,\V)$ (resp.~$(L,\V')$) be a twisted generalized linear series on $X$ along $\gamma$ (resp.~$\gamma'$). Since $\F$ is a subsheaf of $(\O_Y\oplus\O_Z(dP))\otimes\O_T$ with $T$-flat quotient, it follows that 
\[
L(\gamma),L(\gamma')\subseteq \big(L|_Y\oplus L|_Z(dP)\big)\otimes\O_{S}
\]
with $S$-flat quotients, and hence 
\[
\V,\V'\subseteq \big(\Gamma(Y,L|_Y)\oplus\Gamma(Z,L|_Z(dP))\big)\otimes\O_{S}
\]
with $S$-flat quotients. We can thus compare the vector spaces $\V_{s}$ and $\V'_{s'}$ for $s\in S(k)$ and $s'\in S(k')$, and in particular, ask whether they are equal or not; see below. 

\begin{defn}\label{inv-const} Let $(L,\V)$ be a twisted generalized linear series on $X$ along a chain map $\gamma\: S\to T$. Let $C$ be an irreducible component of $S$. We say $(L,\V)$ is \emph{constant} along $C$ if $\gamma|_C$ is constant, and $\V_{s_1}=\V_{s_2}$ for all $s_1,s_2\in C(k)$. It is said to be \emph{invariant} along $C$ if the chain action on $(T,N_0,N_\infty)$ lifts to a chain action on $(S,\gamma^{-1}(N_0),\gamma^{-1}(N_\infty))$ such that $\V_{x\ast Q}=(x^{-1},1)\V_{Q}$ for each $x\in\cc(k)$ and $Q\in C^*(k)$. We say $(L,\V)$ is \emph{everywhere nonconstant} (resp.~\emph{everywhere invariant}) if it is not constant (resp.~it is invariant) along each irreducible component of $S$. It is said to be \emph{continuous} if it is everywhere nonconstant and invariant; in this case, we call $(L,\V)$ a \emph{continuous linear series}.
\end{defn}

Let $(\L,\mathcal{V})$ be a family of twisted generalized linear series on $X$ along a family of chain maps $\gamma\:\mathcal S\to B\times T$. Let $\gamma'\:\mathcal S'\to B\times T$ be another family of chain maps and $f\colon\mathcal S'\to\mathcal S$ a morphism from $\gamma'$ to $\gamma$. Then $(\L,f^*\V)$ is a family of twisted generalized linear series on $X$ along $\gamma'$, called the \emph{expansion} of $(\L,\V)$ along $f$. 

Also, for each map $b\:B'\to B$, considering the Cartesian diagram
\[
\begin{CD}
    \mathcal S'@>b'>> \mathcal S\\
    @V\gamma'VV @V\gamma VV\\
    B'\times T @>b\times 1>> B\times T,
\end{CD}
\]
we have that $\gamma'$ is a family of chain maps, and 
\[
b^*(\L,\V):=((1\times b)^*\L,(b')^*\V)
\]
is a family of twisted generalized linear series on $X$ along $\gamma'$ called the \emph{pullback} of $(\L,\V)$ under $b$. We call $(\L,\mathcal V)$ a \emph{family of everywhere nonconstant} (resp.~\emph{everywhere invariant}, resp.~\emph{continuous}) \emph{twisted generalized linear series on $X$ along $\gamma$} if $b^*(\L,\V)$ is an everywhere nonconstant (resp.~everywhere invariant, resp.~continuous) twisted generalized linear series for each $k$-point $b\:\text{Spec}(k)\to B$.

Finally, given an invertible sheaf $M$ on $B$, we have that 
\[
(\L,\V)\otimes M:=(\L\otimes p_2^*M,\V\otimes\gamma_1^*M)
\]
is another family of twisted generalized linear series on $X$ along $\gamma$, where $p_2\:X\times B\to B$ is the indicated projection.

\begin{defn}\label{tglsequiv} Two families of twisted generalized linear series $(\L,\V)$ and $(\L',\V')$ along families of chain maps $\gamma\colon\mathcal S\to B\times T$ and $\gamma'\colon\mathcal S'\to B\times T$ are said to be \emph{equivalent} if there are an isomorphism $f\colon\mathcal S'\to\mathcal S$ from $\gamma'$ to $\gamma$, an invertible sheaf $M$ on $B$ and an isomorphism $g\:\L'\xrightarrow{\cong}\L\otimes p_2^*M$ such that the induced isomorphism $g(\gamma')$ takes $\V'$ to $f^*\V\otimes(\gamma'_1)^*M$.
\end{defn}

Pullbacks and expansions of equivalent families are equivalent families.

\begin{defn}\label{thefunctor} The (contravariant) functor of
continuous linear series of degree $d$ and
rank $r$ on $X$,
$$
\mathfrak{G}_d^r(X)\colon (\text{Schemes})
\longrightarrow(\text{Sets}),
$$
is the functor that associates to each scheme $B$ the
set $\mathfrak{G}_d^r(X)(B)$ of equivalence classes of families of 
continuous (twisted generalized) linear series of degree $d$ and rank $r$ on $X$ along families of chain maps to $T$ over $B$.
\end{defn}

\section{Parametrization of generalized linear series}\label{gls}

\subsection{The scheme of generalized linear series}\label{gls-sch}

\textbf{Fix} $J:=\Pic^{(d,0)}_X$, the component of the Picard scheme of $X$ parameterizing invertible sheaves of bidegree $(d,0)$, that is, degree $d$ on $Y$ and $0$ on $Z$. Let $\P$ be a Poincar\'e sheaf on $X\times J$. Let $D$ be an effective divisor of $X$ supported away from $P$ and denote by $D'$ its pullback to $X\times J\times T$ under the
projection. Put 
\[
\P\boxtimes\F:=p_{1,2}^*(\P)\otimes p_{1,3}^*(\F)\quad\text{and}\quad
\mathcal E:=p_{2,3\ast}\left(\P\boxtimes\F\otimes\mathcal O(D')\right),
\]
where $p_{1,2}$, $p_{1,3}$ and $p_{2,3}$ are the projections onto the indicated factors:
\begin{equation}\label{Diag:H}
\xymatrix{
&X\times J\times T\ar[dl]_{p_{1,3}}\ar[d]_{p_{2,3}}\ar[dr]^{p_{1,2}}&\\
X\times T & J\times T & X\times J.}
\end{equation}
Assume $D$ has high enough degree, so that $R^1p_{2,3\ast}\left(\P\boxtimes\F\otimes\mathcal O(D')\right)=0$, and hence $\mathcal E$ is locally free. Put 
$\mathcal G:=p_{2,3\ast}(\P\boxtimes\F\otimes\mathcal O(D')|_{D'})$. Then $\mathcal G$ is locally free. Also, restriction induces a map $\alpha\colon\mathcal E\to\mathcal G$. 

Let $r$ be a nonnegative integer. Let $\alpha'\colon\mathcal E'\to\mathcal G'$ be the pullback of $\alpha$ to 
$\text{Grass}_{J\times T}(r+1,\E)$ and $\mathcal V'\subseteq\mathcal E'$ the universal locally free subsheaf of rank $r+1$. Denote by $H^{r}_d(X)$ the degeneracy scheme of $\alpha'|_{\mathcal V'}$, the locus where the map of bundles is zero. \textbf{Fix} the notation. Then $H^r_d(X)$ is a projective scheme, called the \emph{scheme of generalized linear series}.  It is a scheme over $J\times T$.

Let $\nu=(\nu_1,\nu_2)\: H^r_d(X)\to J\times T$ be the induced map. Let $\mathcal V$ be the restriction of $\mathcal V'$ to $H^r_d(X)$. It is a locally free subsheaf of rank $r+1$ of $p_*((1\times\nu)^*(\P\boxtimes\F))$, where $p:X\times H^{r}_d(X)\rightarrow H^{r}_d(X)$ is the projection. Furthermore, the pair 
$((1\times\nu)^*(\P\boxtimes\F),\mathcal V)$ is a family of generalized linear series of degree $d$ and rank $r$ on $X\times H^r_d(X)/H^r_d(X)$.

For further use, we denote by $H^r_d(X,L)$ the fiber of $\nu_1$ over each $L\in J(k)$, and call it the \emph{scheme of generalized linear series with sections in $L$}.  


\subsection{The action.}\label{tG}

Since $J$ is isomorphic to an Abelian variety, every action of $\cc$ on $J$ is trivial. The action of $\cc$ on $T$ lifts thus to a unique action on $J\times T$. 

We claim the action lifts naturally
to one on $H^{r}_d(X)$. Indeed, $H^r_d(X)\subseteq \text{Grass}_{J\times T}(r+1,\E)$. Now,
$$
\P\boxtimes\F\subseteq p_{1,2}^*\Big(\P\otimes p_1^*\big(\mathcal O_Y\oplus\mathcal O_Z(dP)\big)\Big),
$$
where $p_1\colon X\times J\to X$ is the projection map. Put
$$
\W_1:=p_{2\ast}\big(\P\otimes p_1^*\mathcal O_Y(D\cap Y)\big)\text{ and }
\W_2:=p_{2\ast}\big(\P\otimes p_1^*\mathcal O_Z(dP+D\cap Z)\big),
$$
where $p_2\colon X\times J\to J$ is the projection. Then $\mathcal E\subseteq q_1^*(\W_1\oplus\W_2)$, where $q_1\colon J\times T\to J$ is the projection. It follows that 
$$
H^r_d(X)\subseteq \text{Grass}_{J}\big(r+1,\W_1\oplus\W_2\big)\times T.
$$
There is now a natural action of $\cc$ on $\text{Grass}_{J}\big(r+1,\W_1\oplus\W_2\big)$, induced by the automorphism $(x^{-1},1)$ on $\W_1\oplus\W_2$ for each $x\in\cc(k)$. Pair it with the action on $T$. Then $H^r_d(X)$ is invariant and the induced map $H^r_d(X)\to T$ is equivariant. 

We linearize the action on $H^r_d(X)$ as follows. 
First, $T\subseteq(\IP^1)^{d+1}$ with equations $x_iy_j=0$ for $i<j$, where $(x_i,y_i)$ are the coordinates of the $i$-th $\IP^1$ for $i=0,\dots,d$. The embedding is equivariant, for the product action of 
$\cc$ on $(\IP^1)^{d+1}$ given by $x\ast (x_i:y_i)=(xx_i:y_i)$ for each $x\in\cc(k)$ and $i=0,\dots,d$.

Second, consider the Pl\"ucker embedding 
$\text{Grass}_{J}\big(r+1,\W_1\oplus\W_2\big)\subseteq\IP_J(\W)$, where
$$
\W:=\bigwedge^{r+1}(\W_1\oplus\W_2)=\bigoplus_{i=0}^{r+1}\Big(\bigwedge^{i}\W_1\otimes\bigwedge^{r+1-i}\W_2\Big);
$$
it is naturally equivariant. Let $\mathcal A$ be a very ample invertible sheaf on $J$, enough ample that $\W\otimes\mathcal A$ be globally generated. Then 
$\IP_J(\W)\subseteq\IP(\Gamma(J,\W\otimes\mathcal A)^*)\times J$, again naturally equivariant. Furthermore, since $\mathcal A$ is very ample, it induces an embedding $J\subseteq\IP(\Gamma(J,\mathcal A)^*)$. 

Putting everything together, we have an equivariant embedding
$$
\Lambda\colon H^r_d(X)\longrightarrow\IP(\Gamma(J,\W\otimes\mathcal A)^*)\times\IP(\Gamma(J,\mathcal A)^*)\times(\IP^1)^{d+1},
$$
which composes with the Segre embedding to give an equivariant embedding of $H^r_d(X)$ in a projective space $\mathbf P^N$.

The action on $\mathbf P^N$ induces actions on the Hilbert schemes $\text{Hilb}_{H^r_d(X)}$ and $\text{Hilb}_{\mathbf P^N}$. We will denote by $\text{Hilb}^{\phi}_{H^r_d(X)}$ the open (and closed) subscheme of $\text{Hilb}_{H^r_d(X)}$ parameterizing subschemes of $H^r_d(X)$ with multivariate Hilbert polynomial 
$$
\phi(u,v,s_0,\dots,s_d):=u(r+1)+\sum_{i=0}^d s_i+1
$$
with respect to the embedding $\Lambda$. We will also denote by $\text{Hilb}^{\phi,\cc}_{H^r_d(X)}$ and $\text{Hilb}^{\cc}_{\mathbf P^N}$ the closed subschemes of 
$\text{Hilb}^{\phi}_{H^r_d(X)}$ and $\text{Hilb}_{\mathbf P^N}$ parameterizing invariant subschemes of 
$H^r_d(X)$ and of $\mathbf P^N$, respectively. They fit in the natural Cartesian diagram:
$$
\begin{CD}
\text{Hilb}^{\phi,\cc}_{H^r_d(X)} @>>> \text{Hilb}^{\cc}_{\mathbf P^N}\\
@VVV @VVV\\
\text{Hilb}^{\phi}_{H^r_d(X)} @>>> \text{Hilb}_{\mathbf P^N}.
\end{CD}
$$
Both $\text{Hilb}^{\phi,\cc}_{H^r_d(X)}$ and $\text{Hilb}^{\cc}_{\mathbf P^N}$ are examples of \textit{invariant Hilbert schemes}. A detailed study of these schemes and their properties can be found in \cite{Br}.

\subsection{Invariant curves in the Grassmannian}\label{sec:invt-Grass}

Let $W_1$ and $W_2$ be finite-dimensional vector spaces over $k$, and put $W:=W_1\oplus W_2$. Let $\iota_i\colon W_i\to W$ be the natural injection and $\rho_i\colon W\to W_i$ be the natural surjection for $i=1,2$.

Let $n$ be a positive integer and $G:=\text{Grass}\left(n,W\right)$. 
Recall that $G$ is a nonsingular projective variety
with $\text{Pic}(G)=\z$. The degree of a curve in $G$ is its intersection number with the ample generator of $\text{Pic}(G)$.

The action of $\cc$ on $W=W_1\oplus W_2$, given by $x\ast (u_1,u_2):=(x^{-1}u_1,u_2)$ for each $x\in\cc(k)$ and $u_i\in W_i$ for $i=1,2$, induces an action on $G$. For each $x\in\cc(k)$ and $V\in G(k)$, we have $x\ast V=(x^{-1},1)V$. 

For each $V\in G(k)$, let $h_V\colon\mathbb\IP^1\to G$ be the morphism sending $(x:1)$ to $x\ast V$ for each $x\in\cc(k)$. 

\begin{prop}\label{hV} Let $V\in G(k)$ be a nonfixed point. 
Let $V_{\infty,1}:=\iota_1^{-1}(V)$ and 
$V_{0,2}:=\iota_2^{-1}(V)$. Let 
$V_{\infty,2}:=\rho_2(V)$ and $V_{0,1}:=\rho_1(V)$. Then $V_{\infty,1}\subsetneqq V_{0,1}$ and $V_{0,2}\subsetneqq V_{\infty,2}$. Furthermore,
\begin{equation}\label{limlim}
\lim_{x\to 0}x*V=V_{0,1}\oplus V_{0,2}\quad \text{and} \quad
\lim_{x\to \infty}x*V=V_{\infty,1}\oplus V_{\infty,2}.
\end{equation}
Finally, $h_V$ is an isomorphism onto its image $\Gamma$, and
$$
\deg \Gamma=
\dim_k V_{0,1}-\dim_k V_{\infty,1}=
\dim_k V_{\infty,2}-\dim_k V_{0,2}.
$$
\end{prop}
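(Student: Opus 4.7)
My plan is to reduce everything to a well-chosen basis of $V$ and then read off the limits and the degree directly from the Pl\"ucker picture. First I would note that $\rho_1|_V\colon V\to W_1$ has image $V_{0,1}$ and kernel $V\cap W_2=V_{0,2}$, while $\rho_2|_V\colon V\to W_2$ has image $V_{\infty,2}$ and kernel $V\cap W_1=V_{\infty,1}$, so $\dim V_{0,1}+\dim V_{0,2}=n=\dim V_{\infty,1}+\dim V_{\infty,2}$. The inclusions $V_{\infty,1}\subseteq V_{0,1}$ and $V_{0,2}\subseteq V_{\infty,2}$ are then obvious, and for strictness I would argue by contradiction: if $V_{\infty,1}=V_{0,1}$, then for each $(u_1,u_2)\in V$ we have $u_1\in V\cap W_1\subseteq V$, so $u_2\in V\cap W_2=V_{0,2}$, forcing $V=V_{\infty,1}\oplus V_{0,2}$, which is $\cc$-fixed, contradiction. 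The other strict inclusion follows analogously.

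Next, set $a=\dim V_{\infty,1}$, $b=\dim V_{0,2}$, $m=n-a-b$. I would pick bases $u_1,\ldots,u_a$ of $V_{\infty,1}\subseteq W_1$ and $w_1,\ldots,w_b$ of $V_{0,2}\subseteq W_2$, and lift a basis of $V/(V_{\infty,1}\oplus V_{0,2})$ to vectors $v_{a+b+1},\ldots,v_n\in V$, adjusted by elements of $V_{\infty,1}\oplus V_{0,2}$ so that their $W_1$-components $u_{a+b+1},\ldots,u_n$ and $W_2$-components $w_{a+b+1},\ldots,w_n$ lie in chosen complements of $V_{\infty,1}$ in $V_{0,1}$ and of $V_{0,2}$ in $V_{\infty,2}$, respectively. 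The isomorphisms $V/(V_{\infty,1}\oplus V_{0,2})\cong V_{0,1}/V_{\infty,1}\cong V_{\infty,2}/V_{0,2}$ induced by $\rho_1,\rho_2$ guarantee that $u_1,\ldots,u_a,u_{a+b+1},\ldots,u_n$ is a basis of $V_{0,1}$ and $w_1,\ldots,w_b,w_{a+b+1},\ldots,w_n$ is a basis of $V_{\infty,2}$.

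Now I would compute directly. Since the action multiplies each $u_i$ by $x^{-1}$ and fixes each $w_i$, expanding
\[
(x\ast v_1)\wedge\cdots\wedge(x\ast v_n)=x^{-a}\,U\wedge W\wedge\bigwedge_{i\ge a+b+1}(x^{-1}u_i,w_i)
\]
over subsets $S\subseteq\{a+b+1,\ldots,n\}$ gives $\sum_{k=0}^{m}x^{-(a+k)}\omega_{a+k}$, where $U=u_1\wedge\cdots\wedge u_a$, $W=w_1\wedge\cdots\wedge w_b$, and $\omega_{a+k}\in\bigwedge^{a+k}W_1\otimes\bigwedge^{b+m-k}W_2$ collects the terms with exactly $k$ of the $u_i$'s. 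The linear independence of the $u_i$'s in $W_1$ (resp.\ the $w_i$'s in $W_2$) forces each $\omega_{a+k}$ to be nonzero, and in fact $\omega_{a+m}$ is the Pl\"ucker vector of $V_{0,1}\oplus V_{0,2}$ while $\omega_a$ is that of $V_{\infty,1}\oplus V_{\infty,2}$; this proves \eqref{limlim}. Homogenizing with $x=s/t$, the map $h_V$ becomes
\[
(s:t)\longmapsto\bigl[\,s^m\omega_a+s^{m-1}t\,\omega_{a+1}+\cdots+t^m\omega_{a+m}\,\bigr],
\]
which, because the $\omega_{a+k}$ lie in distinct graded summands of $\bigwedge^n W$ and hence are linearly independent, is the rational normal curve of degree $m$ in the $\IP^m$ they span. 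Thus $h_V$ is a closed embedding, and the degree of $\Gamma$ pulled back from the generator of $\Pic(G)=\z$ (the Pl\"ucker $\O(1)$) equals $m=\dim V_{0,1}-\dim V_{\infty,1}=\dim V_{\infty,2}-\dim V_{0,2}$.

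The main obstacle I foresee is the nonvanishing of every intermediate $\omega_{a+k}$; this is the point where the adjustment of the basis $v_{a+b+1},\ldots,v_n$ so that the $u_i$'s genuinely complete a basis of $V_{0,1}$ and the $w_i$'s a basis of $V_{\infty,2}$ is essential. Without this adjustment the Pl\"ucker image could collapse into a linear subspace of smaller dimension and yield a curve of lower degree, so one has to set up the basis with care before expanding the wedge.
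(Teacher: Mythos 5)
Your proposal is correct and follows essentially the same route as the paper: choose a basis of $V$ adapted to the decomposition $V=V_{\infty,1}\oplus V'\oplus V_{0,2}$ so that the $W_1$- and $W_2$-components of the middle vectors are each linearly independent, then read off the limits and the degree from the weight decomposition of the Pl\"ucker image (the paper phrases this via the coordinates $p_I(y,z)=z^{n_{I,1}}y^{n_{I,2}}p_I$, you via the graded pieces $\omega_{a+k}\in\bigwedge^{a+k}W_1\otimes\bigwedge^{b+m-k}W_2$, which is the same computation). Your extra care about the nonvanishing of every intermediate $\omega_{a+k}$ and the explicit contradiction argument for the strict inclusions are exactly the points the paper treats more tersely.
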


\begin{proof} The first statement is clear, where the inequalities follow from the fact that $V$ is nonfixed. As for the remaining statements, let $V'\subseteq V$ such that 
$V=V_{\infty,1}\oplus V'\oplus V_{0,2}$. Then the projection map $V'\to W_2$ is an isomorphism to a subspace $V'_2\subseteq V_{\infty,2}$ such that 
$V_{\infty,2}=V'_2\oplus V_{0,2}$. Similarly, the projection map $V'\to W_1$ is an isomorphism to a subspace $V'_1\subseteq V_{0,1}$ such that 
$V_{0,1}=V_{\infty,1}\oplus V'_1$. Choose bases $B_{\infty,1}$ of $V_{\infty,1}$, $B'$ of $V'$ and $B_{0,2}$ of $V_{0,2}$. Consider the corresponding bases 
$B'_1$ of $V'_1$ and $B'_2$ of $V'_2$, images of $B'$ under the respective isomorphisms $V'\to V'_1$ and $V'\to V'_2$. Complete $B_{\infty,1}\cup B'_1$ to a basis $B_1$ of 
$W_1$ and $B_{0,2}\cup B'_2$ to a basis $B_2$ of $W_2$. The union $B:=B_1\cup B_2$ is a basis of $W$.

For each subset $I$ of $B$, let $n_{I,1}$ be the number of vectors in $B_1$
and $n_{I,2}$ the number of those in $B_2$. For $I$ with $|I|=n$, let $p_I$ be the corresponding Pl\"ucker coordinate of
$V$ and $p_I(y,z)$ that of $V_{(y:z)}$, the image of $(y:z)$ under $h_V$, for each $(y:z)\in\IP^1(k)$ with $yz\neq 0$.
Then $p_I(y,z)=z^{n_{I,1}}y^{n_{I,2}}p_I$.

It follows from our choice of bases that $p_I\neq 0$ if and only if 
$$
B_{\infty,1}\cup B_{0,2}\subseteq I\subseteq B_{\infty,1}\cup B'_1\cup B'_2\cup B_{0,2}
$$
and the intersections $I\cap B'_1$ and $I\cap B'_2$ are identified in $B'$ with disjoint subsets covering $B'$. It thus follows that $\{(n_{I,1},n_{I,2})\,|\,p_I\neq 0\}$ is equal to
$$
\{(\dim_k V_{\infty,1},\dim_k V_{\infty,2}),(\dim_k V_{\infty,1}+1,\dim_k V_{\infty,2}-1),\dots,(\dim_k V_{0,1},\dim_k V_{0,2})\},
$$
and hence that $h_V$ is an isomorphism to its image $\Gamma$, which has degree
$$
n-\dim_k V_{\infty,1}-\dim_k V_{0,2}=\dim_k V_{\infty,2}-\dim_k V_{0,2},
$$ 
proving the third statement of the proposition.

The second statement follows as well, as it is clear that 
$p_I(y,z)/z^{\dim_k V_{\infty,1}}y^{\dim_k V_{0,2}}$ is zero for $(y:z)=(0:1)$ for all $I$ but $I=B_{\infty,1}\cup B_{0,2}\cup B'_2$, which corresponds to the subspace $V_{\infty,1}\oplus V_{\infty,2}$ of $W$, yielding the second formula in \eqref{limlim}. The first follows analogously.
\end{proof}

\begin{prop} \label{hVV} Let $V,V'\in G(k)$. Assume that $\rho_1(V')=\iota^{-1}_1(V)$ (resp.~$\rho_2(V')=\iota_2^{-1}(V)$). Then the intersection $h_{V}(\IP^1)\,\cap\,h_{V'}(\IP^1)$ is empty or scheme-theoretically a point, the latter if and only if $\rho_2(V)=\iota_2^{-1}(V')$ (resp.~$\rho_1(V)=\iota_1^{-1}(V')$).
\end{prop}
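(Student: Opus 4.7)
The plan is to treat the set-theoretic and scheme-theoretic assertions in turn, working under the hypothesis $\rho_1(V')=\iota_1^{-1}(V)$, which is the same as $V'_{0,1}=V_{\infty,1}$; the other case is symmetric on interchanging the roles of $W_1$ and $W_2$.

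Set-theoretically, by Proposition~\ref{hV} (and trivially when $V$ or $V'$ is $\cc$-fixed, in which case the corresponding $h_?(\IP^1)$ is a single fixed point), each of $h_V(\IP^1)$ and $h_{V'}(\IP^1)$ is a $\cc$-invariant irreducible closed subscheme of $G$ whose $\cc$-fixed points lie among the four subspaces $V_{0,1}\oplus V_{0,2}$, $V_{\infty,1}\oplus V_{\infty,2}$ and $V'_{0,1}\oplus V'_{0,2}$, $V'_{\infty,1}\oplus V'_{\infty,2}$. Since distinct free $\cc$-orbits in $G$ are disjoint, the set-theoretic intersection is either all of one of the curves, or a $\cc$-invariant subscheme of $\IP^1$ supported on common fixed points. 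I would first rule out $h_V(\IP^1)=h_{V'}(\IP^1)$: matching up both pairs of limits under the hypothesis would force an equality such as $V_{\infty,1}=V_{0,1}$, contradicting the strict inclusion $V_{\infty,1}\subsetneq V_{0,1}$ of Proposition~\ref{hV}. A four-way case analysis of the remaining pairings of limit points then shows that only $V_{\infty,1}\oplus V_{\infty,2}=V'_{0,1}\oplus V'_{0,2}$ is compatible with the strict inclusions of Proposition~\ref{hV}, and this happens exactly when $V_{\infty,2}=V'_{0,2}$, i.e.\ $\rho_2(V)=\iota_2^{-1}(V')$.

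For the scheme-theoretic statement, set $U:=V_{\infty,1}\oplus V_{\infty,2}=V'_{0,1}\oplus V'_{0,2}$, and I would identify the tangent directions of the two curves at $U$ as $\cc$-weight vectors in $T_U G$. The action of $\cc$ on $\IP^1$ coming from $x\ast(y:z)=(xy:z)$ has tangent weight $+1$ at $y=0$ and weight $-1$ at $y=\infty$. By $\cc$-equivariance of $h_V$ and $h_{V'}$, and since $U=h_V(\infty)=h_{V'}(0)$, the two tangent lines at $U$ are eigenlines in $T_U G$ of weights $-1$ and $+1$ respectively; eigenvectors of distinct weights are linearly independent, so the two smooth $\IP^1$'s meet transversally at $U$. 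Therefore the scheme-theoretic intersection is a reduced point.

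The main obstacle is the bookkeeping in the set-theoretic step: one must run through all combinations of $V$ and $V'$ being $\cc$-fixed or not and every coincidence of limit points, in each case squeezing a contradiction from the strict inclusions $V_{\infty,1}\subsetneq V_{0,1}$, $V'_{\infty,1}\subsetneq V'_{0,1}$ of Proposition~\ref{hV}, except for the one configuration that matches the claimed iff condition. Once the unique candidate intersection point has been pinned down, transversality at it via the weight computation is immediate.
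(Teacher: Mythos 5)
Your proposal is correct, but it takes a genuinely different route from the paper. You separate a set-theoretic step (two distinct irreducible $\cc$-invariant curves in $G$ can only meet at common fixed points; of the four possible coincidences among $h_V(0),h_V(\infty),h_{V'}(0),h_{V'}(\infty)$, the strict inclusions of Proposition~\ref{hV} kill all but $h_V(\infty)=h_{V'}(0)$, which is exactly the condition $\rho_2(V)=\iota_2^{-1}(V')$) from a scheme-theoretic step (transversality at the common fixed point, because the two tangent lines are $\cc$-eigenlines of weights $+1$ and $-1$, distinct characters in any characteristic, so their intersection in $T_UG$ is zero and the intersection scheme is reduced). The paper argues instead by pure linear algebra: $h_V(\IP^1)$ and $h_{V'}(\IP^1)$ lie in the sub-Grassmannians of $n$-planes of $\rho_1(V)\oplus\rho_2(V)$ and $\rho_1(V')\oplus\rho_2(V')$, and under the hypothesis these sub-Grassmannians meet inside the Grassmannian of $n$-planes of $\iota_1^{-1}(V)\oplus\bigl(\rho_2(V)\cap\rho_2(V')\bigr)$, a space of dimension at most $n$; such a Grassmannian is empty or a single reduced point, so finiteness and reducedness of the intersection come simultaneously and for free, and the ``if and only if'' is read off by checking when that unique candidate subspace lies on both curves. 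The paper's route avoids both your fixed/non-fixed case analysis and the tangent-weight computation; yours makes the equivariant geometry explicit and would generalize to situations where the ambient containment trick is unavailable, at the cost of having to invoke the immersion statement of Proposition~\ref{hV} to know the tangent lines are well defined. If you keep your version, do spell out the four-way bookkeeping and the degenerate cases where $V$ or $V'$ is fixed, since that is where all the content of the ``only if'' direction sits.
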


\begin{proof} By the symmetry, we may assume $\rho_1(V')=\iota^{-1}_1(V)$. Now, 
$h_V(\IP^1)$ (resp.~$h_{V'}(\IP^1)$) is contained in the subGrassmannian of $G$ parameterizing $n$-dimensional subspaces of $\rho_1(V)\oplus\rho_2(V)$ (resp.~$\rho_1(V')\oplus\rho_2(V')$).
Since $\rho_1(V')=\iota^{-1}_1(V)$, the intersection of the two subGrassmannians is contained in the Grassmannian of $n$-dimensional subspaces of 
\begin{equation}\label{eq:VV'-intersection}
\iota^{-1}_1(V)\oplus(\rho_2(V)\cap\rho_2(V')).
\end{equation}
But this space has dimension at most $n$, and thus the Grassmannian is either empty or a point. It follows that $h_{V}(\IP^1)\,\cap\,h_{V'}(\IP^1)$ is either empty or scheme-theoretically a point. 

Assume the latter. Then, the point of intersection is the space \eqref{eq:VV'-intersection}, which has dimension $n$ only if $\rho_2(V)\subseteq\rho_2(V')$, in which case \eqref{eq:VV'-intersection} is equal to $\iota^{-1}_1(V)\oplus\rho_2(V)$. This space is clearly a point on $h_V(\IP^1)$. It will be one on $h_{V'}(\IP^1)$ if and only if $\rho_2(V)=\iota_2^{-1}(V')$. \end{proof}

\section{Representability}

\subsection{Embeddedness}

Let $\mathfrak g:=(\L,\V)$ be a family of twisted generalized linear series of degree $d$ and rank $r$ 
along $\gamma$, for a family of chain maps $\gamma=(\gamma_1,\gamma_2)\colon\mathcal S\to B\times T$. Recall $\mathcal P$ from Section~\ref{gls-sch}. Then 
there are an invertible sheaf $N$ on $B$ and a map $h\colon B\to J$ 
such that $(1_X\times h)^*\mathcal P\cong \L\otimes N$. Up to replacing $\mathfrak g$ by an equivalent family, we may assume $N=\mathcal O_B$. 
The induced map $f:=(h\gamma_1,\gamma_2)\colon\mathcal S\to J\times T$ satisfies 
$(1_X\times f)^*(\mathcal P\boxtimes\mathcal F)\cong\L(\gamma)$. The locally free subsheaf $\V\subseteq p_{\mathcal S*}\mathcal L(\gamma)$ 
of rank $r+1$ induces thus a map 
$f_{\mathfrak g}\colon\mathcal S\to H^r_d(X)\times B$.

\begin{prop}\label{betaclass} If $\mathfrak g$ is a family of continuous linear series, then $f_{\mathfrak g}$ is a closed embedding. Furthermore, its image $\mathcal S_{\mathfrak g}$ is $\cc$-invariant and has relative multivariate Hilbert polynomial $\phi$ over $B$. 
\end{prop}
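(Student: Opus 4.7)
Since $\mathcal S$ is flat and proper over $B$, the three assertions (closed embedding, $\cc$-invariance, and multivariate Hilbert polynomial $\phi$) can all be checked on geometric fibers, and the hypothesis of being a family of continuous linear series restricts fiberwise to continuous linear series; so the plan is to prove everything for $B=\text{Spec}(k)$ and use flatness to conclude in general. Working with a continuous linear series $(L,V)$ along $\gamma\:S\to T$, the everywhere invariance supplies a chain action of $\cc$ on $(S,\gamma^{-1}(N_0),\gamma^{-1}(N_\infty))$ making $\gamma$ equivariant and satisfying $\V_{x\ast Q}=(x^{-1},1)\V_Q$. For each component $C$ of $S$, equivariance of $\gamma|_C$ forces its image to be either a $\cc$-fixed (i.e.~special) point of $T$ or a whole $T_i$; combining with $\gamma_*[S]=[T]$, exactly one component of $S$ maps isomorphically onto each $T_i$, and every other component contracts to a special point of $T$, with $\V|_C$ necessarily nontrivial on the latter by everywhere-nonconstancy.

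\textbf{Closed embedding and invariance.} Through the embedding $H^r_d(X)\hookrightarrow\text{Grass}_J(r+1,W_1\oplus W_2)\times T$ from Section~\ref{tG}, $f_{\mathfrak g}$ sends $Q$ to $(\V_Q,\gamma(Q))$. On a component mapping isomorphically to some $T_i$ the $T$-coordinate already embeds $C$, while on a contracting component Proposition~\ref{hV} identifies the Grass-coordinate $Q\mapsto\V_Q$ with an isomorphism onto the orbit closure $h_{\V_Q}(\IP^1)$, since $\V|_C$ is non-fixed. Hence $f_{\mathfrak g}|_C$ is a closed embedding for each $C$. To rule out identifications across distinct components, $f(Q)=f(Q')$ with $Q\in C$, $Q'\in C'$ forces $\gamma(Q)=\gamma(Q')$ to be a special point of $T$, hence $Q,Q'$ special in their respective components. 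The chain structure of $S$ then reduces the problem to pairs of adjacent components sharing a node of $S$, where Proposition~\ref{hVV} shows the scheme-theoretic intersection of the two orbit closures is a single point, necessarily the image of that node. Combined with componentwise injectivity on tangent spaces this gives a closed embedding, and $\cc$-equivariance of $f_{\mathfrak g}$ immediately yields $\cc$-invariance of $\mathcal S_{\mathfrak g}$.

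\textbf{Hilbert polynomial, and the main obstacle.} Since $\mathcal S_{\mathfrak g}\cong\mathcal S$ is a chain of rational curves, $\chi(\O_{\mathcal S_{\mathfrak g}})=1$. The $J$-image of $f_{\mathfrak g}$ is the single point $L$, so the $v$-coefficient is $0$. Exactly one component of $\mathcal S$ maps isomorphically onto each $T_i\subset(\IP^1)^{d+1}$, giving $s_i$-coefficient $1$, hence $\sum s_i$. For the Pl\"ucker $u$-coefficient, by Proposition~\ref{hV} each component $C_j$ of $\mathcal S$ contributes $\dim V_{0,1}^{C_j}-\dim V_{\infty,1}^{C_j}$ (the iso-components contribute $0$, and this formula works uniformly), and the matching of limit data at each node, $V_{\infty,1}^{C_{j-1}}=V_{0,1}^{C_j}$, telescopes the total to $\dim V_{0,1}^{\text{first}}-\dim V_{\infty,1}^{\text{last}}$. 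From the explicit fibers of $\mathcal F$ at $N_0$ and $N_\infty$ in Section~\ref{sec:twisters}, together with the bidegree $(d,0)$ of $L$, the fibers $L|_Z(-P)$ and $L|_Y(-(d+1)P)$ have negative degree on $Z$ and $Y$ respectively, so $\V_{N_0^S}\subseteq W_1$ and $\V_{N_\infty^S}\subseteq W_2$, forcing the first term to equal $r+1$ and the second to vanish. The total $u$-degree is therefore $r+1$, completing $\phi=u(r+1)+\sum s_i+1$. Carrying out this telescoping identification rigorously --- tracking the chain ordering and accommodating contracting components at the boundary of $\mathcal S$ --- is the main technical piece I expect to work out.
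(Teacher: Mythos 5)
Your strategy coincides with the paper's in all essentials: reduce to $B=\mathrm{Spec}(k)$, view $f_{\mathfrak g}$ as a map to $G\times T$ with $G=\text{Grass}(r+1,W_1\oplus W_2)$, use Proposition~\ref{hV} for the componentwise embeddings and the limit spaces at the special points, Proposition~\ref{hVV} for transversality at nodes where both adjacent components contract, and, for the $u$-coefficient, the telescoping of $\dim_k\rho_1$ together with the negative degrees of $L|_Z(-P)$ and $L|_Y(-(d+1)P)$ at the two ends. The Hilbert-polynomial computation you single out as the remaining technical piece is carried out in the paper exactly as you describe, so that part is not where the difficulty lies.

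The genuine gap is in global injectivity. Your assertion that $f_{\mathfrak g}(Q)=f_{\mathfrak g}(Q')$ forces $Q$ and $Q'$ to be special points of their components fails when a component contracts under the map to $T$ (every one of its points maps to the same node $N_\ell$), and the subsequent ``reduction to pairs of adjacent components'' is precisely what must be proved: if $S_i,S_{i+1},\dots,S_j$ all contract to the same $N_\ell$, Proposition~\ref{hVV} only controls the intersection of two orbit closures whose limit data already match; it does not by itself exclude $f(S_i)\cap f(S_j)\neq\emptyset$ for $j\geq i+2$, nor an identification of non-nodal points of consecutive components. The paper closes this by extracting from the limit formulas of Proposition~\ref{hV} that, along such a string, $\rho_2(f(-))$ is strictly increasing and $\rho_1(f(-))$ strictly decreasing at each intermediate node $Q_{i+1},\dots,Q_j$ (and weakly at the endpoints, with strictness unless $R_1=Q_{i+1}$ and $R_2=Q_j$), so that $f(R_1)=f(R_2)$ forces $R_1=R_2$. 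You have all the ingredients to supply this monotonicity, but as written the step is asserted rather than proved, and without it the injectivity of $f_{\mathfrak g}$ is not established.
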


\begin{proof} We may assume $B=\text{Spec}(k)$. Then $f_{\mathfrak g}$ factors through the fiber $H^r_d(X,\L)$ of $H^r_d(X)$ over $\L$, which is naturally an invariant subscheme of $G\times T$, where \[
G:=\text{Grass}(r+1,W)\quad\text{for }
W:=W_1\oplus W_2:=\Gamma(Y,\L|_Y)\oplus\Gamma(Z,\L|_Z(dP)).
\]
Let $\iota_i\colon W_i\to W$ be the natural injection and $\rho_i\colon W\to W_i$ be the natural surjection for $i=1,2$.

As a map to $G\times T$, we have $f_{\mathfrak g}=(f,\gamma_2)$. 
Let $S_0,\dots,S_m$ be the sequence of irreducible components of $\mathcal S$, ordered in such a way that 
\begin{enumerate}
    \item $\gamma_2(S_0)=T_0$ and $\gamma_2(S_m)=T_d$,
    \item $S_i\cap S_j\neq\emptyset$ if and only if $|i-j|\leq 1$.
    \end{enumerate}
Let $Q_i:=S_{i-1}\cap S_i$ for $i=1,\dots,m$, and $Q_0,Q_{m+1}\in\mathcal S$ with $\gamma_2(Q_0)=N_0$ and $\gamma_2(Q_{m+1})=N_\infty$. 

Since $\mathfrak g$ is everywhere invariant, the image under $f_{\mathfrak g}$ of each $S_i$ is invariant, thus $\mathcal S_{\mathfrak g}$ is invariant. We will argue now that $f_{\mathfrak g}$ is an embedding. First of all, since $\mathfrak g$ is also everywhere nonconstant, for each $i=0,\dots,m$, either $\gamma_2|_{S_i}$ is an embedding, or there is $s\in S_i$ such that $f(s)$ is not fixed in $G$, and hence $f|_{S_i}$ is an embedding by Proposition~\ref{hV}. In any case, 
$f_{\mathfrak g}|_{S_i}$ is an embedding.

To show that $f_{\mathfrak g}$ is injective, by contradiction, let $R_1\in S_i$ and $R_2\in S_j$ for $i\leq j$ be distinct points on $\mathcal S$ such that $f_{\mathfrak g}(R_1)=f_{\mathfrak g}(R_2)$. Then $\gamma_2(R_1)=\gamma_2(R_2)$. Since $\gamma_2$ is a chain map, $\gamma_2(S_i)=\gamma_2(S_{i+1})=\cdots=\gamma_2(S_j)=N_\ell$ for some $\ell$. As proved above, $f|_{S_i},f|_{S_{i+1}},\dots,f|_{S_j}$ are embeddings. Furthermore, Proposition~\ref{hV} yields as well that 
\[
\rho_2(f(R_1))=\rho_2(f(Q_{i+1}))\subsetneqq\rho_2(f(Q_{i+2}))\subsetneqq\cdots\subsetneqq\rho_2(f(Q_{j}))\subseteq\rho_2(f(R_2)),
\]
where the last inclusion is strict if $R_2\neq Q_j$, and
\[
\rho_1(f(R_1))\supseteq\rho_1(f(Q_{i+1}))\supsetneqq\rho_1(f(Q_{i+2}))\supsetneqq\cdots\supsetneqq\rho_1(f(Q_{j}))=\rho_1(f(R_2)),
\]
where the first inclusion is strict if $R_1\neq Q_{i+1}$. Since $f(R_1)=f(R_2)$, we must have that either $j=i+1$ and $R_1=Q_j=R_2$, or $j=i$, and thus $R_1=R_2$ because $f|_{S_i}$ is injective. In any case, $R_1=R_2$, a contradiction.

We need now only prove that the differentials $d_{Q_j}f_{\mathfrak g}|_{S_{j-1}}$ and $d_{Q_j}f_{\mathfrak g}|_{S_j}$ at $Q_j$ have different images for $j=1,\dots,m$. 

First, their images are nonzero because $f_{\mathfrak g}|_{S_{j-1}}$ and $f_{\mathfrak g}|_{S_{j}}$ are embeddings. Second, their images are distinct, if $\gamma_2|_{S_{j-1}}$ or $\gamma_2|_{S_j}$ is not constant, because then the images of $d_{Q_j}\gamma_2|_{S_{j-1}}$ and $d_{Q_j}\gamma_2|_{S_j}$ are distinct. 

Assume now that $\gamma_2(S_{j-1})$ and $\gamma_2(S_j)$ are each a point, necessarily the same point, a node $N_\ell$ for some $\ell$. Then $f$ restricts to an injection on $S_{j-1}\cup S_j$, and to embeddings $f|_{S_{j-1}}$ and $f|_{S_j}$. We need only show now that the intersection at the unique point on $f(S_{j-1})\,\cap\,f(S_j)$ is transverse. Now, since $\g$ is everywhere invariant, there are isomorphisms $\theta_1\:\cc\to S_{j-1}^*$ and $\theta_2\:\cc\to S_j^*$ such that $\V_{\theta_i(x)}=(x^{-1},1)V_i$ for each $x\in\cc(k)$ and $i=1,2$, where $V_i:=\V_{\theta_i(1)}$. Also, $f\theta_i$ extends to the map $h_i:=h_{V_i}\colon\IP^1\to G$ defined in Subsection~\ref{sec:invt-Grass}. Then $h_1(\infty)$ and $h_2(0)$ are equal to the subspace $V:=\V_{Q_j}\subseteq W$. By Proposition~\ref{hV}, 
\[
h_1(\infty)=\lim_{x\to\infty}x\ast V_1=\iota_1^{-1}(V_1)\oplus\rho_2(V_1)\quad\text{and}\quad
h_2(0)=\lim_{x\to 0}x\ast V_2=\rho_1(V_2)\oplus\iota_2^{-1}(V_2),
\]
so $h_1(\infty)=h_2(0)$ means
\[
\iota_1^{-1}(V_1)=\rho_1(V_2)\quad\text{and}\quad \rho_2(V_1)=\iota_2^{-1}(V_2).
\]
But then Proposition~\ref{hVV} yields that $h_1(\IP^1)$ and $h_2(\IP^1)$, which are $f(S_{j-1})$ and $f(S_{j})$, intersect transversally. 

It remains to show that $\mathcal S_{\mathfrak g}$ has multivariate Hilbert polynomial $\phi$. First of all, $\mathcal S_{\mathfrak g}$ is a curve, so its multivariate Hilbert polynomial $p_{\mathcal S_{\mathfrak g}}(u,v,s_0,\dots,s_d)$ is linear. Also, $\mathcal S_{\mathfrak g}$ is connected and of arithmetic genus zero, thus the constant term of $p_{\mathcal S_{\mathfrak g}}$ is $1$. Since 
$\mathcal S_{\mathfrak g}\subseteq H^r_d(X,\mathcal L)$, the coefficient of $v$ in $p_{\mathcal S_{\mathfrak g}}$ is zero. Since $\gamma_{2,*}[\mathcal S]=[T]$, the coefficient of $s_i$ is $1$ for each $i=0,\dots,d$. It remains to show that the coefficient of $u$ is $r+1$, which is equivalent to showing that $f(\mathcal S)$ has degree $r+1$ in $G$.

Let $U_j:=\V_{Q_j}\subseteq W$ for $j=0,1,\dots,m+1$. It follows from Proposition~\ref{hV} that the degree of $f(\mathcal S)$ is 
$$
\sum_{i=1}^{m+1}\Big(\dim_k\rho_1(U_{i-1})-\dim_k\rho_1(U_{i})\Big),
$$
which is equal to $\dim_k\rho_1(U_0)-\dim_k\rho_1(U_{m+1})$.
But 
$$
U_0\subseteq\Gamma(Y,L|_Y)\oplus\Gamma(Z,L|_Z(-P))\quad\text{and}\quad
U_{m+1}\subseteq\Gamma(Y,L|_Y(-(d+1)P))\oplus\Gamma(Z,L|_Z(dP)).
$$
Since $L|_Z(-P)$ and $L|_Y(-(d+1)P)$ have negative degrees, $\dim_k\rho_1(U_0)=r+1$ and $\rho_1(U_{m+1})=0$. 
Thus, the degree of $f(\mathcal S)$ is $r+1$. 
\end{proof}

\subsection{First main theorem}

\begin{thm}\label{chainmaps} For each connected $S\in \text{\rm Hilb}^{\phi,\cc}_{H^r_d(X)}(k)$, the induced map $S\to T$ is a chain map.
\end{thm}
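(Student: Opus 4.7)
The plan is to read the structure of $S$ off of the multivariate Hilbert polynomial $\phi$ together with the $\cc$-invariance, closely mirroring the computation at the end of the proof of Proposition~\ref{betaclass}. First, I would interpret each coefficient of $\phi$. The constant term $1$ gives $\chi(\mathcal O_S)=1$. The vanishing of the coefficient of $v$, together with connectedness of $S$, forces the image of $S$ in $J$ under $\nu_1$ to be a single point $L\in J(k)$, so $S\subseteq H^r_d(X,L)\subseteq G\times T$ with $G=\text{Grass}(r+1,W)$ and $W=W_1\oplus W_2:=\Gamma(Y,L|_Y)\oplus\Gamma(Z,L|_Z(dP))$. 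The coefficient $1$ of each $s_i$ forces the $1$-cycle $(\gamma_2)_*[S]$ to have multidegree $(1,\ldots,1)$ in $(\IP^1)^{d+1}$, hence to equal $[T]$. The coefficient $r+1$ of $u$ gives $\deg_G(S)=r+1$ in the Plücker embedding.

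Next, I would decompose $S$ using $\cc$-invariance. Every irreducible component $C$ of $S$ is $\cc$-invariant, so $\gamma_2(C)$ is either a $\cc$-fixed point of $T$ or some $T_j$; in the latter case, $\cc$-equivariance of $\gamma_2|_C$ between $\cc$-torsors forces this map to have degree $1$. Combined with the $s_i$-coefficient analysis, this produces for each $i=0,\ldots,d$ a unique irreducible component $C_i$ of multiplicity $1$ in $S$ mapping isomorphically onto $T_i$, with every remaining irreducible component mapping to a $\cc$-fixed point of $T$. Each $C_i$, being an equivariant section over $T_i$, is the graph of $h_{V_i}\colon T_i\to G$ for a unique $V_i$ in the fiber of $H^r_d(X,L)$ over $E_i$; by Proposition~\ref{hV}, $\deg_G(C_i)=\dim\rho_1(V_i)-\dim\iota_1^{-1}(V_i)$ and $C_i$ meets the fibers of the projection $G\times T\to T$ over $N_i$ and $N_{i+1}$ at the fixed points $\rho_1(V_i)\oplus\iota_2^{-1}(V_i)$ and $\iota_1^{-1}(V_i)\oplus\rho_2(V_i)$ respectively.

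The core step is a degree telescoping. Since $S$ is connected, for each $i$ the components $C_i$ and $C_{i+1}$ must be linked in $G\times\{N_{i+1}\}$ through a (possibly empty) chain of $\cc$-invariant curves, and similarly there may be pendant invariant chains at $N_0$ or $N_\infty$. By Proposition~\ref{hV}, every invariant curve of orbit-closure type $h_U(\IP^1)$ strictly decreases $\dim(\,\cdot\cap W_1)$ across its two $\cc$-fixed endpoints, while any invariant curve lying in the $\cc$-fixed locus of $G$ (a product of subGrassmannians of $W_1$ and $W_2$) preserves $\dim(\,\cdot\cap W_1)$ but has strictly positive Plücker degree. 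A telescoping sum across the nodes then yields
\[
\deg_G(S)\;\geq\;\dim\rho_1(V_0)-\dim\iota_1^{-1}(V_d)+(\text{nonnegative extra terms}),
\]
with equality only when $C_i$ and $C_{i+1}$ meet directly at a common fixed point for every $i$ and there are no pendant chains at $N_0$ or $N_\infty$. Exactly as at the end of the proof of Proposition~\ref{betaclass}, membership of $V_0$ and $V_d$ in the respective fibers of $H^r_d(X,L)$ over $E_0$ and $E_d$, combined with $L|_Z(-P)$ and $L|_Y(-(d+1)P)$ having negative degree, gives $\dim\rho_1(V_0)=r+1$ and $\dim\iota_1^{-1}(V_d)=0$. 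So the inequality becomes $r+1\geq r+1$, forcing the minimal configuration; Proposition~\ref{hVV} then provides transversality at each node of $S$.

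To finish, I would rule out non-reduced structure and embedded points from $\chi(\mathcal O_S)=1$: the reduced chain $C_0\cup\cdots\cup C_d$ has $\chi(\mathcal O_{S_{\text{red}}})=1$ (tree of $d+1$ copies of $\IP^1$ with $d$ nodes), and the exact sequence $0\to\mathcal N\to\mathcal O_S\to\mathcal O_{S_{\text{red}}}\to 0$ gives $\chi(\mathcal O_S)\geq\chi(\mathcal O_{S_{\text{red}}})$ with equality iff $S$ is reduced. Hence $S=C_0\cup\cdots\cup C_d$ scheme-theoretically. This makes $S$ a chain with extreme components $C_0$ and $C_d$, makes $\gamma_2^{-1}(N_0)$ and $\gamma_2^{-1}(N_\infty)$ the unique smooth points of $C_0,C_d$ over $N_0,N_\infty$, and gives $(\gamma_2)_*[S]=[T]$; so $\gamma_2\colon S\to T$ is a chain map. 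I expect the telescoping argument in the third paragraph to be the main obstacle, requiring careful classification of all invariant curves at each $\cc$-fixed point of $T$ (both of orbit-closure type and lying in the $\cc$-fixed locus of $G$) and careful bookkeeping of how each configuration contributes to the total $G$-degree.
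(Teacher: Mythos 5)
Your reduction to a fixed $L\in J(k)$, the identification of components $C_0,\dots,C_d$ lying over $T_0,\dots,T_d$, and the telescoping of $G$-degrees via Proposition~\ref{hV} all match the skeleton of the paper's argument (its First, Fifth and Sixth Claims). But the conclusion you draw from the equality $\deg_G(S)=r+1$ is wrong, and it breaks the proof. An invariant orbit-closure curve lying over a node $N_{\ell}$, joining the fixed point $\lim_{x\to\infty}x\ast V_{\ell-1}$ to $\lim_{x\to 0}x\ast V_{\ell}$, contributes to $\deg_G(S)$ exactly the drop in $\dim(\,\cdot\cap W_1)$ between its two endpoints, and this drop is precisely what is needed to bridge the gap $\dim\iota_1^{-1}(V_{\ell-1})-\dim\rho_1(V_{\ell})$ in the telescoping; it is not an ``extra'' nonnegative term. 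Consequently $\deg_G(S)=r+1$ does \emph{not} force $C_{\ell-1}$ and $C_{\ell}$ to meet directly at a common fixed point: it only forces every component to map to an orbit closure and the associated intervals $[\dim\iota_1^{-1}(V),\dim\rho_1(V)]$ to tile $[0,r+1]$ with overlaps only at endpoints. Your final assertion that $S=C_0\cup\cdots\cup C_d$ scheme-theoretically, with exactly $d+1$ components, is therefore false; indeed it would contradict Proposition~\ref{betaclass} and Theorem~\ref{allevels}, since the whole point of $\text{Hilb}^{\phi,\cc}_{H^r_d(X)}$ is that it contains chains with components collapsed to the nodes of $T$, corresponding to level-$\delta$ limit linear series with $\delta\neq\mathbf 1$. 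A chain map $S\to T$ is allowed to collapse components, so the theorem does not, and must not, force $S\cong T$.

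What is missing is the combinatorial control of the part of $S$ sitting over the nodes: one must show that $(S\cap p_2^{-1}(N_\ell))_{\red}$ is connected, that its components line up into a single path joining the component over $T_{\ell-1}$ to the one over $T_{\ell}$, that the degree bound forces all of them to be orbit closures meeting consecutively and transversally at single points (via Proposition~\ref{hVV}), and that no stray components or nonreduced structure survive the comparison of Hilbert polynomials. This is exactly the content of the paper's Second through Seventh Claims and constitutes the bulk of its proof; you correctly flagged this step as the main obstacle, but the resolution is the opposite of the one you guessed.
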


The proof of this theorem is postponed to the next section. We use it now to prove the first main theorem of the article.

\begin{thm}\label{funct-equiv}
The functor $\mathfrak{G}_d^r(X)$ is isomorphic to the functor of points of an open and closed subscheme of $\text{\rm Hilb}_{H_d^r(X)}^{\phi,\cc}$. In particular, there exists a projective scheme $G_d^r(X)$ representing $\mathfrak{G}_d^r(X)$.
\end{thm}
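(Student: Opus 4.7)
The plan is to construct mutually inverse natural transformations between $\mathfrak{G}_d^r(X)$ and the functor of points of an open-and-closed subscheme $G_d^r(X)\subseteq\text{\rm Hilb}^{\phi,\cc}_{H_d^r(X)}$. Since the Hilbert scheme is projective, this immediately yields the projective representing scheme.

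For the forward map $\Phi$, given a family $\mathfrak g=(\L,\V)$ of continuous linear series along $\gamma\colon\mathcal S\to B\times T$, I choose the representative of its equivalence class (Definition~\ref{tglsequiv}) for which the auxiliary twist $N$ appearing before Proposition~\ref{betaclass} is trivial, obtaining $f_{\mathfrak g}\colon\mathcal S\to H_d^r(X)\times B$. Proposition~\ref{betaclass} then says $f_{\mathfrak g}$ is a closed embedding whose image $\mathcal S_{\mathfrak g}$ is $\cc$-invariant and has relative multivariate Hilbert polynomial $\phi$; its classifying map is $\Phi(\mathfrak g)$. Well-definedness on equivalence classes is direct: the reparameterization $f$ in~\ref{tglsequiv} does not move the image, the twist by $p_2^\ast M$ leaves the induced map $B\to J$ unchanged by the universal property of $\P$, and the isomorphism $g(\gamma')$ identifies the $(r{+}1)$-subbundles compatibly.

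Let $G_d^r(X)\subseteq\text{\rm Hilb}^{\phi,\cc}_{H_d^r(X)}$ be the locus of subschemes with geometrically connected fibers; it is open and closed by the local constancy of the number of geometric connected components in a flat proper family. Since every chain is connected, $\Phi$ factors through $G_d^r(X)$. To construct $\Psi$, let $\mathcal S\hookrightarrow H_d^r(X)\times B$ be classified by a $B$-point of $G_d^r(X)$, and compose with $\nu=(\nu_1,\nu_2)\colon H_d^r(X)\to J\times T$. The second coordinate together with the projection to $B$ yields $\gamma\colon\mathcal S\to B\times T$, which is a family of chain maps by Theorem~\ref{chainmaps} applied fiberwise (the sections $\mathcal N_0,\mathcal N_\infty\subseteq\mathcal S$ are the preimages of $B\times N_0$ and $B\times N_\infty$, and are sections because each extreme irreducible component of every fiber maps isomorphically onto $T_0$ or $T_d$). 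For $\nu_1$: each geometrically connected $\cc$-invariant fiber maps to $J$ with image of degree $0$ against an ample sheaf (the coefficient of $v$ in $\phi$ vanishes and $\cc$ acts trivially on $J$), hence to a single point; by rigidity of abelian varieties this yields a unique $h\colon B\to J$ with $\nu_1|_{\mathcal S}=h\circ\gamma_1$. Setting $\L:=(1_X\times h)^{\ast}\P$ and letting $\V$ be the pullback to $\mathcal S$ of the universal subbundle on $H_d^r(X)$, the pair $(\L,\V)$ is a family of twisted generalized linear series along $\gamma$: it is $\cc$-invariant by construction, and everywhere nonconstant because the coefficient $r{+}1$ of $u$ in $\phi$, combined with Proposition~\ref{hV}, forces every irreducible component of every fiber to move nontrivially in either the $T$-factor or the Grassmannian factor.

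The main obstacle is verifying $\Psi\circ\Phi=\mathrm{id}$ and $\Phi\circ\Psi=\mathrm{id}$ at the level of equivalence classes, consistently in families. One direction uses the universal property of $\P$ together with the description of $H_d^r(X)$ as a degeneracy locus in a Grassmannian bundle over $J\times T$: the pair $(h,\V)$ recovered by $\Psi$ matches the data implicit in $\Phi$ modulo the Picard ambiguity encoded in~\ref{tglsequiv}. The other direction is tautological once one observes that the morphism $f_{(\L,\V)}$ built from the $\Psi$-reconstructed data coincides with the original inclusion $\mathcal S\hookrightarrow H_d^r(X)\times B$, since both classify the same $(r{+}1)$-subbundle of the relevant pushforward. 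The subtler verification is that everywhere-nonconstancy for continuous linear series corresponds exactly to the combinatorial constraint imposed by $\phi$, a check ultimately reducing to a local analysis at each node and extreme component of the fibers via Proposition~\ref{hV}.
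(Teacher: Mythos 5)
Your construction follows the paper's proof almost verbatim: Proposition~\ref{betaclass} for the forward direction, Theorem~\ref{chainmaps} for the converse, rigidity of the Abelian variety $J$ to descend the sheaf, and invariance/embeddedness of $\mathcal S$ to get invariance/nonconstancy of the recovered series. The one step where your justification would fail is the claim that the locus of geometrically connected fibers in $\text{Hilb}^{\phi,\cc}_{H^r_d(X)}$ is open \emph{and} closed ``by the local constancy of the number of geometric connected components in a flat proper family.'' That general principle is false: in a flat proper family the number of connected components of the fibers can drop under specialization, and even with the Hilbert polynomial fixed (so $\chi(\mathcal O_{S_b})=1$ for all $b$) a disconnected fiber is not excluded a priori, since it would merely force $h^1(\mathcal O_{S_b})=h^0(\mathcal O_{S_b})-1\geq 1$. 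Connectedness of fibers is closed (Zariski's principle of connectedness) but not open in general, so your definition of $G^r_d(X)$ is not yet known to be open.

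The paper closes exactly this gap by defining the subscheme $U$ as the locus where $h^0(\mathcal S_b,\mathcal O_{\mathcal S_b})=1$, which \emph{is} open by upper semicontinuity, and then invoking Theorem~\ref{chainmaps} in the nontrivial direction: a connected invariant fiber with Hilbert polynomial $\phi$ is a chain of rational curves, hence has arithmetic genus $0$ and $h^0(\mathcal O)=1$. This identifies the open locus $\{h^0=1\}$ with the closed locus of connected fibers, giving open-and-closedness. You already use Theorem~\ref{chainmaps} fiberwise for the chain-map structure, so the repair costs nothing, but as written the openness assertion is unsupported.
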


\begin{proof} Proposition~\ref{betaclass} associates to a family of continuous linear series $\mathfrak g=(\L,\V)$ over a scheme $B$ a certain family of $\cc$-invariant subschemes $\mathcal S_{\mathfrak g}\subseteq H^r_d(X)\times B$ over $B$ with relative multivariate Hilbert polynomial $\phi$ 
whose fibers satisfy $h^0(\mathcal S_{\mathfrak g,b},\mathcal O_{\mathcal S_{\mathfrak g,b}})=1$ for every $b\in B$, thus a 
$B$-point of the open subscheme $U\subseteq\text{\rm Hilb}_{H_d^r(X)}^{\phi,\cc}$ parameterizing subschemes $S$ with $h^0(S,\mathcal O_S)=1$. If $\mathfrak g=(\L',\V')$ is another family equivalent to $\mathfrak G$, the corresponding subscheme $\mathcal S_{\mathfrak g'}$ is equal to $\mathcal S_{\mathfrak g}$, as it follows from Definition~\ref{tglsequiv} and the moduli properties of Jacobians and Grassmannians. 

Conversely, let $\mathcal S\subseteq H_d^r(X)\times B$ be a family of 
$\cc$-invariant closed subschemes over $B$ with relative multivariate Hilbert polynomial $\phi$. The natural map $\pi\colon\mathcal S\to B$ is flat. The fiber $\mathcal S_b$ over each $b\in B$ satisfies $h^0(\mathcal S_b,\mathcal O_{\mathcal S_b})=1$ only if $\mathcal S_b$ is connected. Theorem~\ref{chainmaps} yields the converse: if $\mathcal S_b$ is connected then $h^0(\mathcal S_b,\mathcal O_{\mathcal S_b})=1$. It follows that 
$U$ is also the subset parameterizing connected subschemes of $H_d^r(X)$, which is closed by the principle of connectedness. Furthermore, Theorem~\ref{chainmaps} yields, if the fibers of 
$\mathcal S$ are connected, that $\pi$ is a family of chains of rational curves and the natural map 
$\gamma\:\mathcal S\to T\times B$ is a family of chain maps. Since 
$J$ is isomorphic to an Abelian variety, it follows that the map $\mathcal S\to J$ factors through a map $B\to J$ yielding an invertible sheaf $\mathcal L$ on $X\times B$ of bidegree $(d,0)$ over $B$. Finally, the map $\mathcal S\to H^r_d(X)$ yields a locally free subsheaf 
$\V\subseteq p_{\mathcal S*}(\mathcal L(\gamma))$ of rank $r+1$ such that $\mathfrak g_{\mathcal S}:=(\L,\V)$ is a family of twisted generalized linear series. That these series are everywhere nonconstant follows from the fact that $\mathcal S\subseteq H_d^r(X)\times B$. That in addition they are everywhere invariant follows from the fact that $\mathcal S$ is $\cc$-invariant. Thus $\mathfrak g_{\mathcal S}$ is a family of continuous linear series over $B$. 

It is clear from the constructions that $\mathcal S_{\mathfrak g_{\mathcal S}}=\mathcal S$ and that $\mathfrak g_{\mathcal S_{\mathfrak g}}$ is equivalent to $\mathfrak g$. The last statement follows from the fact that $\text{\rm Hilb}_{H_d^r(X)}^{\phi,\cc}$ is projective because $\text{\rm Hilb}_{H_d^r(X)}^{\phi}$ is.  
\end{proof}

\section{Rigidity}

In this section we prove Theorem~\ref{chainmaps}. 

\begin{proof}[Proof of Theorem~\ref{chainmaps}] Since the coefficient of $v$ in $\phi$ is zero, we must have that 
$S_\red\subseteq H^r_d(X,L)$ for some invertible sheaf $L$ on $X$ with bidegree $(d,0)$, where ``red" denotes the reduced induced subscheme. Again, $H^r_d(X,L)$ is an invariant subscheme of $G\times T$, where $G:=\text{Grass}(r+1,W)$ for
$$
W:=W_1\oplus W_2:=\Gamma(Y,L_Y)\oplus\Gamma(Z,L_Z(dP)).
$$
Let $p_1$ and $p_2$ denote the projections of $G\times T$ onto the indicated factors.

Since $S$ is connected, so is $S_{\text{red}}$. Thus, since $S$ has linear multivariate polynomial, $S$ is of pure dimension~1, and hence so is $S_\red$. We will prove a series of claims:

\vskip0.2cm

{\bf First Claim:} {\it $S_{\text{\rm red}}$ has exactly one irreducible component $S_i$ satisfying $p_2(S_i)=T_i$ for each $i=0,\dots,d$, and $p_2$ restricts to an isomorphism $S_i\xrightarrow{\cong}T_i$. The remaining components of $S_{\text{\rm red}}$ are collapsed to the $N_\ell$ under $p_2$.}

\begin{proof}[Proof first claim] 
Since the coefficient of each $s_i$ in $\phi$ is $1$, 
we have $p_{2*}[S_\red]=[T]$. Thus, 
for each $i=0,\dots,d$ there exists a unique irreducible 
component $S_i$ of $S_{\text{red}}$
mapping birationally onto $T_i$ via $p_2$, the
remaining components being collapsed. Since the $T_i$ are rational, so must be the $S_i$, and the induced maps $S_i\to T_i$ are isomorphisms. 
Since $S$ is invariant, and the action of $\cc$ moves the points on
$T_i^*$ transitively for each $i$, it follows that each remaining
irreducible component of $S_\red$ collapses to a point among
$N_0,\dots,N_{d+1}$. 
\end{proof}

For each $i=0,\dots, d$, let $N_{i,0}$ be the unique point on $S_i$ mapped to $N_i$ and $N_{i,\infty}$ that mapped to $N_{i+1}$. For each $i=0,\dots,d+1$, let $S_{i-\frac{1}{2}}:=(S\cap p_2^{-1}(N_i))_\red$. 

\vskip0.2cm

{\bf Second Claim:} {\it The following statements hold:
\begin{enumerate}
\item The $S_{i-\frac{1}{2}}$ are pairwise disjoint.
\item For each $i=0,\dots,d+1$, we have that $S_{i-\frac{1}{2}}$ intersects $S_j$ if and only if $j\in\{i-1,i\}$, in which case $S_{i-\frac{1}{2}}\cap S_j$ is scheme-theoretically a point.
\item For each $i=1,\dots,d$ we have that $S_{i-\frac{1}{2}}$ is a point only if $N_{i-1,\infty}= N_{i,0}$, in which case that is the point on $S_{i-\frac{1}{2}}$.
\end{enumerate}
}

\begin{proof}[Proof second claim] The first statement is clear, as the images $p_2(S_{i-\frac{1}{2}})$ are pairwise disjoint. The third is clear as well, since, by definition, $N_{i-1,\infty}$ and $N_{i,0}$ are on $S_{i-\frac{1}{2}}$, if defined. As for the second, let  $i\in\{0,\dots,d+1\}$. Since $N_i\in T_j$ if and only if $j\in\{i-1,i\}$, we have that $S_{i-\frac{1}{2}}$ intersects $S_j$ if and only if $j\in\{i-1,i\}$. Furthermore, since each $S_j$ is mapped by $p_2$ isomorphically to $T_j$, and the scheme theoretic image of $S_{i-\frac{1}{2}}$ is $N_i$, we must have that $S_{i-\frac{1}{2}}\cap S_j$ is scheme-theoretically a point for each $j\in\{i-1,i\}$.
\end{proof}

{\bf Third Claim:} {\it $S_{i-\frac{1}{2}}$ is connected for each $i=0,\dots,d+1$.}

\begin{proof}[Proof third claim] Let $i\in\{0,\dots,d+1\}$. We will prove the claim through a series of statements.

\vskip0.1cm

{\bf Statement 1:} {\it For each $i=1,\dots,d$, every connected subcurve $C$ of $\overline{S_\red-\{S_{i-1}\cup S_i\}}$ intersecting both $S_{i-1}$ and $S_i$ is contained in $S_{i-\frac{1}{2}}$.}

Indeed, the image $p_2(C)$ is connected and contains points on $T_{i-1}$ and on $T_i$. It does not contain either $T_{i-1}$ or $T_i$ because only $S_{i-1}$ and $S_i$ cover these components, by First Claim. By the same reason, it does not contain points on $T_0\cup\cdots\cup T_{i-2}$ or $T_{i+1}\cup\cdots\cup T_d$. In particular, $p_2(C)$ contains neither $N_{i-1}$ nor $N_{i+1}$, and hence must be a point on both $T_{i-1}$ and $T_i$, thus equal to $N_i$. So $C\subseteq S_{i-\frac{1}{2}}$. 

\vskip0.1cm

{\bf Statement 2:} {\it $S_{i-\frac{1}{2}}$ contains at most one isolated point, which, if it exists, is equal to $N_{0,0}$ if $i=0$, to $N_{d,\infty}$ if $i=d+1$ and to $N_{i-1,\infty}= N_{i,0}$ if $1\leq i\leq d$.}

Indeed, let $Q$ be an isolated point on $S_{i-\frac{1}{2}}$. Then $Q$ belongs to an irreducible component $C$ of $S_\red$ whose image under $p_2$ contains $N_i$. Also, since $Q$ is isolated in $S_{i-\frac{1}{2}}$, we have that $C\not\subseteq S_{i-\frac{1}{2}}$, that is, $p_2(C)\neq N_i$. But then $C=S_{i-1}$ or $C=S_i$, whence $Q=N_{i-1,\infty}$ or $Q=N_{i,0}$, respectively. The former is possible only if $i>0$ and the latter is possible only if $i\leq d$. Hence, if $i=0$ then $C=S_0$ and $Q=N_{0,0}$, whereas if $i=d+1$ then $C=S_d$ and $Q=N_{d,\infty}$.

Assume now that $1\leq i\leq d$. Either $Q=N_{i-1,\infty}$ or $Q=N_{i,0}$. By contradiction, assume $N_{i-1,\infty}\neq N_{i,0}$. We will show directly that neither $N_{i-1,\infty}$ nor $N_{i,0}$ is isolated on $S_{i-\frac{1}{2}}$. Indeed, since $N_{i-1,\infty}\neq N_{i,0}$, we have $S_{i-1}\cap S_i=\emptyset$. Since $S$ is connected, there is a connected subcurve $C$ of $\overline{S_\red-\{S_{i-1}\cup S_i\}}$ intersecting both $S_{i-1}$ and $S_i$. Statement~1 yields $C\subseteq S_{i-\frac{1}{2}}$. Since $C$ intersects $S_i$, there is an irreducible component $C'$ of $C$ intersecting $S_i$. Since $C'\subseteq S_{i-\frac{1}{2}}$, the intersection is $N_{i,0}$, and hence $N_{i,0}$ is not isolated on $S_{i-\frac{1}{2}}$. Analogously, neither is $N_{i-1,\infty}$. 

\vskip0.1cm

{\bf Statement 3:} {\it Let $C$ be a maximal connected subcurve of $S_\red$ such that $C\subseteq S_{i-\frac{1}{2}}$. Then $C$ contains 
$N_{i-1,\infty}$ if $i\geq 1$ and $N_{i,0}$ if $i\leq d$.}

Indeed, reason by contradiction. Without loss of generality, assume $i\leq d$ and $C$ does not contain $N_{i,0}$. Since $p_2(C)=N_i$, this means that $C$ does not intersect $S_i$. Since $S_\red$ is connected, there is a connected subcurve $C'$ of $\overline{S_\red-\{S_i\cup C\}}$ intersecting both $S_i$ and $C$. Let $C''$ be an irreducible component of $C'$ intersecting $C$. Then $N_i\in p_2(C'')$. Since $C'\neq S_i$, either $p_2(C' )=N_i$ or $C''=S_{i-1}$, if $i\geq 1$. The former contradicts the maximality of $C$. Thus $i\geq 1$ and $C''=S_{i-1}$. Since $p_2(C)=N_i$, we must have that $S_{i-1}$ meets $C$ at $N_{i-1,\infty}$. Now, if $S_{i-1}$  meets $S_i$, it must be at a point over $N_i$, but the unique point on $S_{i-1}$ above $N_i$ is $N_{i-1,\infty}$. Since $N_{i-1,\infty}$ lies on $C$ and $C$ does not meet $S_i$, we must have that $S_{i-1}$ does not meet $S_i$. But since $C'$ meets $S_i$ and contains $S_{i-1}$, there is a connected subcurve $C'''\subseteq\overline{C'-S_{i-1}}$ meeting $S_{i-1}$ and $S_i$. By Statement~1, we must have $p_2(C''')=N_i$. Then $C'''$ does not intersect $C$ by the maximality of $C$. In particular, $C'''$ does not contain $N_{i-1,\infty}$. But $C'''$ intersects $S_{i-1}$, and can only intersect $S_{i-1}$ at $N_{i-1,\infty}$, a contradiction.

\vskip0.1cm

{\bf Statement 4:} {\it If $S_{i-\frac{1}{2}}$ is not a point, then 
$S_{i-\frac{1}{2}}$ has no isolated point.} 

Indeed, since $S_{i-\frac{1}{2}}$ has at most one isolated point by Statement~2, if $S_{i-\frac{1}{2}}$ is not a point then it contains a nonisolated point $Q$. Let $C$ be a maximal connected subcurve of $S_\red$ containing $Q$ such that $C\subseteq S_{i-\frac{1}{2}}$. Then Statement~3 says that $C$ contains $N_{i-1,\infty}$ if $i\geq 1$ and $N_{i,0}$ if $i\leq d$. In particular, neither $N_{i-1,\infty}$ for $i\geq 1$ nor $N_{i,0}$ for $i\leq d$ is isolated in $S_{i-\frac{1}{2}}$. But then $S_{i-\frac{1}{2}}$ contains no isolated point by Statement~2.

\vskip0.1cm

{\bf Statement 5:} {\it $S_{i-\frac{1}{2}}$ is connected.}

If $S_{i-\frac{1}{2}}$ is a point, then it is clearly connected. If not, then $S_{i-\frac{1}{2}}$ has no isolated point by Statement~4. It is thus a disjoint union of connected subcurves of $S_\red$. By Statement~3, each such subcurve contains $N_{i-1,\infty}$ if $i\geq 1$ and $N_{i,0}$ if $i\leq d$. But then all these subcurves contain the same point, and hence there is just one of them. In other words, $S_{i-\frac{1}{2}}$ is connected.
\end{proof}

{\bf Fourth Claim:} {\it There are irreducible components $C_0,\dots,C_m$ of $S_\red$ such that
\begin{enumerate} 
\item $C_0=S_0$ and $C_m=S_d$;
\item $C_{i-1}\cap C_i\neq\emptyset$ for $i=1,\dots,m$. 
\item There is an increasing sequence $i_0,i_1,\dots,i_d$ of integers such that $C_{i_\ell}=S_\ell$ for $\ell=0,\dots,d$;
\item For each $\ell=1,\dots,d$, we have $i_{\ell}=i_{\ell-1}+1$ if $S_{\ell-1}$ meets $S_\ell$; otherwise, $i_{\ell}>i_{\ell-1}+1$ and $C_{i_{\ell-1}+1}\cup\cdots\cup C_{i_{\ell}-1}$ is contained in $S_{\ell-\frac{1}{2}}$.
\end{enumerate}}

\begin{proof}[Proof fourth claim]
Start with $C_0:=S_0$. Each time $C_i=S_{\ell-1}$ for $\ell=1,\dots,d$, we do the following: If $S_{\ell}$ intersects $S_{\ell-1}$, we put $C_{i+1}:=S_\ell$. If not, then $N_{\ell-1,\infty}\neq N_{\ell,0}$, and thus $S_{\ell-\frac{1}{2}}$ is not a point by Second Claim. By Third Claim, $S_{\ell-\frac{1}{2}}$ is connected. By Second Claim, there are irreducible components $C$ and $C'$ of $S_{\ell-\frac{1}{2}}$ such that $C$ intersects $S_{\ell-1}$ and $C'$ intersects $S_\ell$. Put $C_{i+1}:=C$. Since $S_{\ell-\frac{1}{2}}$ is connected, there is a sequence of irreducible components of $S_{\ell-\frac{1}{2}}$, each intersecting the next, starting with $C_{i+1}$ and ending with $C'$. Denote these components sequentially by $C_{i+1},\cdots,C_{i+j}$, where $j$ is thus the number of components in the sequence and $C_{i+j}=C'$. Then put $C_{i+j+1}:=S_{\ell}$.
\end{proof} 

We observe that $S_\red$ may not be the union of the $C_i$. Indeed, 
$S_{\ell-\frac{1}{2}}$ may not be the union $C_{i_{\ell-1}+1}\cup\cdots\cup C_{i_{\ell}-1}$.

\vskip0.2cm

{\bf Fifth Claim:} {\it The $p_1(C_i)$ which are orbit closures are either points or smooth rational curves. The sum of their degrees in $G$ is at least $r+1$, with equality only if:
\begin{enumerate}
    \item all the $p_1(C_i)$ are orbit closures, 
    \item for $i,j\in\{0,\dots,m\}$ with $i<j$ such that $p_1(C_i)$ and $p_1(C_j)$ are curves that intersect, $p_1(C_i)\cap p_1(C_j)$ is scheme-theoretically a point, and $p_1(C_{i+1}),\dots,p_1(C_{j-1})$ are all equal to this point.
\end{enumerate}}

\begin{proof}[Proof fifth claim] For each $\ell=1,2$, let $\iota_\ell\:W_\ell\to W$ be the natural injection and $\rho_\ell\:W\to W_\ell$ the natural surjection. Since $S$ is $\cc$-invariant, each $C_i$ is $\cc$-invariant, whence the $p_1(C_i)$ are $\cc$-invariant. 

For each $i=0,\dots,m$, let $V_i\subseteq W$ be the image under $p_1$ of a general point on $C_i$. Put 
\[
a_i:=\min\{\dim_k\rho_1(V)\,|\,V\in p_1(C_i)\}
\quad\text{and}\quad 
A_i:=\max\{\dim_k\rho_1(V)\,|\,V\in p_1(C_i)\}.
\]
Thus $a_i\leq A_i$. If $V_i$ is fixed by the action of $\cc$, then $V_i=\rho_1(V_i)\oplus\rho_2(V_i)$. The same holds for each $V\in p_1(C_i)$ by continuity, whence $a_i=A_i$. In this case, if $p_1(C_i)$ is an orbit closure then $p_1(C_i)$ is the point $V_i$. If $V_i$ is not fixed, then $p_1(C_i)$ is the closure of the orbit of $V_i$ in $G$, and is thus a smooth rational curve of degree $A_i-a_i>0$ by Proposition~\ref{hV}. Furthermore, the boundary of the orbit of $V_i$ consists of two points on $p_1(C_i)$ where $a_i$ and $A_i$ are attained. 

Now, for each $i=1,\dots,m$, since $C_{i-1}\cap C_i\neq\emptyset$, it follows that  
the intervals $[a_{i-1},A_{i-1}]$ and $[a_i,A_i]$ intersect, and thus 
\[
I:=\bigcup_{i=0}^m[a_i,A_i]
\]
is an interval contained in $[0,r+1]$.
Now, $C_0=S_0$ and $C_m=S_d$. Since $p_2(N_{0,0})=N_0$ and $p_2(N_{d,\infty})=N_\infty$, we have that
\[
p_1(N_{0,0})\subseteq\Gamma(Y,L|_Y)\oplus\Gamma(Z,L|_Z(-P))\quad
\text{and}\quad 
p_1(N_{d,\infty})\subseteq\Gamma(Y,L|_Y(-(d+1)P))\oplus\Gamma(Z,L|_Z(dP)).
\]
Since $L|_Z(-P)$ and $L|_Y(-(d+1)P)$ have negative degrees, it follows that $A_0=r+1$ and $a_m=0$. 
Thus $I=[0,r+1]$. Then the sum of the degrees of the $p_1(C_i)$ for $i$ such that $V_i$ is nonfixed, which is $\sum_i(A_i-a_i)$, is at least $r+1$.

Assume the sum is exactly $r+1$. Then, first, if $V_i$ is fixed, $p_1(C_i)$ must be a point, $V_i$. Thus, all the $p_1(C_i)$ are orbit closures. Second, the intervals $[a_i,A_i]$ can overlap only at the boundaries. For each $i=1,\dots,m$, since $[a_{i-1},A_{i-1}]$ and $[a_i,A_i]$ intersect, we must have that either 
$a_i=A_{i-1}$ or $A_i=a_{i-1}$. Since $A_0=r+1$ and $a_m=0$, we must have that $A_i=a_{i-1}$ for each $i=1,\dots,m$. 

Let now $i,j\in\{0,\dots,m\}$ with $i<j$ such that $p_1(C_i)$ and $p_1(C_j)$ are curves that intersect. Since $i<j$, we have $A_j\leq a_i$. Since $p_1(C_i)$ and $p_1(C_j)$ intersect, the intervals $[a_i,A_i]$ and $[a_j,A_j]$ intersect as well, and hence $A_j=a_{j-1}=A_{j-1}=\cdots=A_{i+1}=a_i$. It follows that 
$p_1(C_{i+1}),\dots,p_1(C_{j-1})$ are points, actually, the same point $V\in G$, since $C_{i+1}\cup\cdots\cup C_{j-1}$ is connected.  Furthermore, since $A_j=a_i$, and $p_1(C_i)$ and $p_1(C_j)$ are orbit closures that intersect, it follows from Proposition~\ref{hV} that 
there is a unique point of intersection, which is
\[
\iota_1^{-1}(V_i)\oplus\rho_2(V_i)=\rho_1(V_j)\oplus\iota_2^{-1}(V_j).
\]
In addition, Proposition~\ref{hVV} yields that the intersection is transverse. Since 
$C_i$ intersects $C_{i+1}$ and $C_j$ intersects $C_{j-1}$, we must have that $V$ is the intersection.
\end{proof}

Put $S^0:=C_0\cup\dots\cup C_m$. 

\vskip0.2cm

{\bf Sixth Claim:} {\it The sum of the degrees of the $p_1(C_i)$ in $G$ is $r+1$. Each $C_i$ is rational and the map $C_i\to p_1(C_i)$ is either an isomorphism or constant, the latter only if $i=i_\ell$ for some $\ell$. Furthermore, $S^0=S_\red$ and $S$ is generically reduced. In particular, $S_{-\frac{1}{2}}$ and $S_{d+\frac{1}{2}}$ are points.}

\begin{proof}[Proof sixth claim] Since $S$ has Hilbert polynomial $\phi$, and since the coefficient of $u$ in $\phi$ is $r+1$, the products of the degree of the covering $C_i\to p_1(C_i)$ with the degree of $p_1(C_i)$ sum up to at most $r+1$. On the other hand, the degrees of the $p_1(C_i)$ sum up to at least $r+1$ by Fifth Claim. It thus follows that both sums are $r+1$ and that $C_i\to p_1(C_i)$ is either constant or an isomorphism for each $i=0,\dots,m$. However, since $C_i\subseteq G\times T$, the curve $p_1(C_i)$ can only be a point if $p_2(C_i)$ is not one, that is, if $i=i_\ell$ for some $\ell$, in which case $C_i$ is isomorphic to $T_\ell$ under $p_2$. In any case, $C_i$ is a rational curve. 

Finally, the multivariate Hilbert polynomial of $S^0$ has the same linear part as that of $S$. Since $S^0\subseteq S_\red\subseteq S$, it follows that $S^0=S_\red$ and $S$ is generically reduced. 
\end{proof}

\vskip0.2cm

{\bf Seventh Claim:} {\it For each $i,j\in\{0,\dots,m\}$ with $i<j$, the curve $C_i$ intersects $C_j$ only if $j=i+1$, transversally at a single point if so.}

\begin{proof}[Proof seventh claim] Assume $C_i\cap C_j\neq\emptyset$. 
Since the $S_{\ell-\frac{1}{2}}$ are disjoint, and $S_h\cap S_q\neq\emptyset$ only if $|h-q|\leq 1$, we must have 
$i_{\ell-1}\leq i<j\leq i_{\ell}$ for some $\ell$. Now, since $p_2(S_{\ell-\frac{1}{2}})=N_\ell$ scheme-theoretically, and 
$S_{\ell-1}$ is mapped isomorphically to 
$T_{\ell-1}$, 
we have that $C_j$ intersects $C_{i_{\ell-1}}$ only if $j=i_{\ell-1}+1$, transversally at $N_{\ell-1,\infty}$ if so. Thus 
the claim holds if $i=i_{\ell-1}$. By analogy, it holds as well if $j=i_\ell$.

Assume $i_{\ell-1}<i<j<i_\ell$. Then the maps $C_q\to p_1(C_q)$ are isomorphisms for all $q$ with $i\leq q\leq j$ by Sixth Claim.
Since $C_i\cap C_j\neq\emptyset$, it follows from Fifth Claim that $j=i+1$ and $p_1(C_i)\cap p_1(C_j)$ is scheme-theoretically a point. Furthermore, since the maps $C_i\to p_1(C_i)$ and $C_j\to p_1(C_j)$ are isomorphisms by Sixth Claim, also $C_i\cap C_j$ is scheme-theoretically a point.
\end{proof}

{\bf Eighth Claim:} {\it The curve $S$ is a chain of rational curves and $p_1\:S\to T$ is a chain map.}

\begin{proof} It follows from Sixth Claim that the $C_i$ are rational, and thus $S^0$ is a chain of rational curves by Seventh Claim. Then $S^0$ is connected of arithmetic genus $0$, and hence the constant part of its multivariate Hilbert polynomial coincides with that of $S$. So does its linear part by Sixth Claim. Hence $S$ and $S^0$ have the same Hilbert polynomial, and since $S^0\subseteq S$, we must have $S^0=S$. So $S$ is a chain of rational curves. First and Fourth Claims now yield that $p_1\colon S\to T$ is a chain map.
\end{proof}

The proof of Theorem~\ref{chainmaps} is now complete.
\end{proof}

\section{Level-$\delta$ limit linear series}\label{deltalls}

Let $\delta=(\delta_1,\dots,\delta_d)$ be a $d$-tuple of natural numbers. Let 
$$
\Delta:=\Delta(\delta):=\Big\{0,\frac{1}{\delta_1},\frac{2}{\delta_1},\dots,1,
1+\frac{1}{\delta_2},1+\frac{2}{\delta_2},\dots,2,\dots,d-1,
d-1+\frac{1}{\delta_d},d-1+\frac{2}{\delta_d},\dots,d\Big\}.
$$
We say that $i,j\in\Delta$ are \emph{consecutive} if $i<j$ and $\Delta\cap(i,j)=\emptyset$.

\subsection{Review}

The reader will recognize the notions and definitions below from \cite{ENR}, where we restricted to the case where all the $\delta_i$ are equal. However, the general theory is similar.

Let $L$ be an invertible sheaf on $X$. The \emph{twist $\delta$-sequence} associated to $L$ is the collection of sheaves
$L^{(i)}$ indexed by
$i\in\Delta$, where
$L^{(i)}:=L\otimes\mathcal O_X^{(i)}$ if $i\in\mathbf Z$ and, otherwise,
\begin{equation}\label{Linot}
L^{(i)}:=\Big(L|_Y\ox\O_Y\big(\lfloor-i\rfloor P\big)\Big)\bigoplus\Big(L|_Z\ox\O_Z\big(\lfloor i\rfloor P\big)\Big),
\end{equation}
where $\lfloor i\rfloor$ stands for the floor of $i$, the largest integer at most $i$.

The condition on the $L^{(i)}$ ensures that there are isomorphisms
$$
L^{(i)}|_Z\longrightarrow\text{Ker}\big(L^{(j)}\to L^{(j)}|_Y\big)\quad\text{and}\quad
L^{(j)}|_Y\longrightarrow\text{Ker}\big(L^{(i)}\to L^{(i)}|_Z\big)
$$
for each consecutive $i,j\in\Delta$, whence there are nonzero maps $\varphi^i\colon L^{(i)}\to L^{(j)}$ factoring through $L^{(i)}|_Z$ and $\varphi_i\colon L^{(j)}\to L^{(i)}$ factoring through $L^{(j)}|_Y$. The 
maps $\varphi^i$ and $\varphi_i$ are defined up to multiplication by $k^*$. 

If $L=\mathcal O_X$, then our notation is compatible with that used in Subsection~\ref{sec:twisters}. 

\begin{defn} 
A {\it level-$\delta$ limit linear series} of degree $d$ on $X$ is
the data $\mathfrak g=(L;V^{(i)},\,i\in\Delta)$ of an
invertible sheaf $L$ on $X$ of bidegree $(d,0)$ and vector subspaces
$V^{(i)}\subseteq\Gamma(X,L^{(i)})$ for $i\in\Delta$
of equal dimension such that
\begin{equation}\label{compatible}
\varphi^i(V^{(i)})\subseteq
V^{(j)}\text{\ and\ }
\varphi_i(V^{(j)})\subseteq
V^{(i)}\text{\ for each consecutive }i,j\in\Delta.
\end{equation}
We say that $\mathfrak g$ has rank $r$
if the $V^{(i)}$ have projective dimension $r$. We say that
$\mathfrak g$ is \emph{exact} if 
\begin{equation}\label{exact}
\text{Im}\big(\varphi^i|_{V^{(i)}}\big)=
\text{Ker}\big(\varphi_i|_{V^{(j)}}\big)
\text{\ and\ }
\text{Im}\big(\varphi_i|_{V^{(j)}}\big)=
\text{Ker}\big(\varphi^i|_{V^{(i)}}\big)\text{\ for consecutive }i,j\in\Delta.
\end{equation}
\end{defn}


Let $\mathfrak g=(L;V^{(i)},\,i\in\Delta)$ be a level-$\delta$ limit linear series of rank $r$ and degree $d$ on $X$. It follows from \eqref{Linot} that 
$L^{(i)}\subseteq L|_Y\oplus L|_Z(dP)$ for each $i\in\Delta$. Thus, subspaces $V^{(i)}$ of $\Gamma(X, L^{(i)})$ may be viewed as subspaces of 
$U:=U_1\oplus U_2$, where $U_1:=\Gamma(Y,L|_Y)$ and $U_2:=\Gamma(Z,L|_Z(dP))$. As such, Conditions~\eqref{compatible} are equivalent to
\begin{equation}\label{compatible2}
\rho_2(V^{(i)})\subseteq\iota_2^{-1}(V^{(j)})
\quad\text{and}\quad
\rho_1(V^{(j)})\subseteq\iota_1^{-1}(V^{(i)})
\quad\text{for each consecutive }i,j\in\Delta,
\end{equation}
whereas Conditions~\eqref{exact} are equivalent to
\begin{equation}\label{exact2}
\rho_2(V^{(i)})=\iota_2^{-1}(V^{(j)})
\quad\text{and}\quad
\rho_1(V^{(j)})=\iota_1^{-1}(V^{(i)})
\quad\text{for each consecutive }i,j\in\Delta,
\end{equation}
where 
$\iota_j\: U_j\to U_1\oplus U_2$ is the natural injection and 
$\rho_j\: U_1\oplus U_2\to U_j$ is the natural surjection for $j=1,2$.

The data of the vector spaces $(V^{(i)},\,i\in\Delta)$ and the maps
$h_i:=\varphi_{i}|_{V^{(j)}}$ and 
$h^i:=\varphi^i|_{V^{(i)}}$ for consecutive $i,j\in\Delta$ 
will be called a \emph{linked chain of vector spaces}. For each $i\in\Delta$, 
let $m_i:=(r+1)-p_i-q_i$ where
$$
p_i:=\dim\Ker(h_{i-1}),\quad q_i:=\dim\Ker(h^i).
$$
Let $N_{\mathfrak g}\colon\Delta\to\mathbf Z^3$ be the map taking $i$ to $(p_i,q_i,m_i)$; it is called the \emph{numerical data} associated to the linked chain and thus to $\mathfrak g$. We say $N_{\mathfrak g}$ is \emph{exact} if $\sum m_i=r+1$. Then $\mathfrak g$ is exact if and only 
if $N_{\mathfrak g}$ is; see \cite[Prop. 3.6, p.~836]{ENR}. We say $N_{\mathfrak g}$ and $\mathfrak g$ are \emph{minimal} if $m_i>0$ for each $i\in\Delta-\mathbf Z$.

As in \cite{ENR}, Prop.~3.2, p.~834, there are a projective
scheme $G_{d,\delta}^r(X)$ parameterizing level-$\delta$
limit linear series $\mathfrak g=(L;V^{(i)},\,i\in\Delta)$ of degree $d$ and rank $r$ on $X$, and an open
subscheme
$$
G^{r,*}_{d,\delta}(X)\subseteq
G^{r}_{d,\delta}(X),
$$
parameterizing those which are exact. Also, there is a natural action of 
$\cc^\Delta$ on $G_{d,\delta}^r(X)$ such that 
$(c_i,i\in\Delta)\ast(L_1;V_1^{(i)},i\in\Delta)=(L_2; V_2^{(i)},i\in\Delta)$ if and only if $L_1\cong L_2$ and $V_2^{(i)}=V_1^{(i)}$ for $i\in\Delta\cap\mathbf Z$ and $V_2^{(i)}=(c_i,1)V_1^{(i)}$ otherwise, as subspaces of $U=U_1\oplus U_2$. The action leaves the sets of minimal and exact limit linear series invariant. Two level-$\delta$ limit linear series 
$(L_1;V_1^{(i)},i\in\Delta)$ and $(L_2;V_2^{(i)},i\in\Delta)$ are called \emph{equivalent} if they lie on the same orbit by the action of $\cc^{\Delta}$. Equivalent limit linear series have the same numerical data.

As shown in \cite{ENR}, Section~3, the numerical data yield a stratification of $G^r_{d,\delta}(X)$ into locally closed subsets, as follows: For each 
function $N\:\Delta\to\z^3$, let
\[
G_{d,\delta}^r(X;N):=\left\{\mathfrak{g}\in 
G_{d,\delta}^r(X)\,|\, N_{\mathfrak g}=N\right\}.
\]
So that $G_{d,\delta}^r(X;N)$ is nonempty, a certain condition on $N$ is necessary, called admissibility in \cite{ENR}, Def.~3.7, p.~836. In addition, the open
subscheme $G^{r,*}_{d,\delta}(X)\subseteq G^{r}_{d,\delta}(X)$ decomposes as a disjoint union of subsets which are both open and closed,
\begin{equation}\label{estrata}
G_{d,\delta}^{r,*}(X)=\coprod_{N\text{ exact}}
G_{d,\delta}^r(X;N).
\end{equation}
The $G_{d,\delta}^r(X;N)$ can be given a natural structure of moduli scheme.

For each (exact) function $N\:\Delta\to\mathbf Z^3$, let $\Delta_N\subseteq\Delta$ be the subset of those $i$ for which $i\in\mathbf Z$ or $N(i)\not\in\mathbf Z^2\times\{0\}$. Let $\delta'=(\delta'_1,\dots,\delta'_d)$ be the unique $d$-tuple of natural numbers for which there is an isomorphism of posets $f\:\Delta'\to\Delta_N$ such that $f(i)=i$ for each $i\in\z$, 
where $\Delta':=\Delta(\delta')$. Let $N':=Nf$. Then $N'$ is (exact and) minimal. Analogously to what was shown in \cite{ENR}, Prop.~4.3, p.~839, the natural forgetful morphism $G_{d,\delta}^r(X;N)\to G_{d,\delta'}^r(X;N')$, taking the limit linear series $(L;V(i),i\in\Delta)$ to $(L;V(f(i)),i\in\Delta')$, is an isomorphism.

Because of this isomorphism, we will consider the open subscheme
\[
G_{d,\delta}^{r,*}(X;\text{\rm min})\subseteq G_{d,\delta}^{r,*}(X),
\]
which is the (disjoint) union of the 
$G_{d,\delta}^r(X;N)$ for $N$ exact and minimal. Clearly, we have
$G_{d,\mathbf 1}^{r,*}(X;\text{\rm min})=G_{d,\mathbf 1}^{r,*}(X)$, where $\mathbf 1:=(1,\dots,1)$.

\subsection{Second main theorem}

Let $T^{(\delta)}$ be the chain of rational curves obtained from $T$ by splitting the branches of $T$ at $N_i$ and connecting them by a chain of rational curves of length $\delta_i-1$ for each $i=1,\dots,d$. Thus we index the components $T^{(\delta)}_i$ of $T^{(\delta)}$ with $i\in\Delta$. For each consecutive  $i,j\in\Delta$, let $N^{(\delta)}_j$ be the intersection of $T^{(\delta)}_{i}$ with $T^{(\delta)}_j$. Clearly, there is a natural map $\mu^{(\delta)}\:T^{(\delta)}\to T$ collapsing all the components $T^{(\delta)}_{i}$ for $i\not\in\mathbf Z$, and identifying $T^{(\delta)}_{i}$ with $T_i$ for $i\in\mathbf Z$.

Let $N_0^{(\delta)}$ and $N_\infty^{(\delta)}$ be the points on $T^{(\delta)}$ such that $\mu^{(\delta)}(N_0^{(\delta)})=N_0$ and $\mu^{(\delta)}(N_\infty^{(\delta)})=N_\infty$. Then $(T^{(\delta)},N_0^{(\delta)},N_\infty^{(\delta)})$ is a two-pointed chain and $\mu^{(\delta)}$ is a chain map. 

For each $i\in\mathbf Z$, let $E_i^{(\delta)}$ be the point on $T_i^{(\delta)}$ above $E_i$. For each $i\in\Delta-\mathbf Z$, fix $E_i^{(\delta)}\in(T^{(\delta)}_i)^*$. The chain action of $\cc$ on $(T,N_0,N_\infty)$ such that $1\ast E_i=E_i$ for each $i=0,\dots,d$ lifts to a unique chain action on $(T^{(\delta)},N_0^{(\delta)},N_\infty^{(\delta)})$ such that $1\ast E^{(\delta)}_i=E^{(\delta)}_i$ for each $i\in\Delta$.

\begin{thm}\label{allevels} For each 
$\delta=(\delta_1,\dots,\delta_d)\in\mathbf N^d$ and exact and minimal function $N\colon\Delta(\delta)\to\n^3$
there is a natural morphism
\[
\Psi_\delta^N\:G^{r}_{d,\delta}(X;N)\longrightarrow
G_d^r(X).
\]
The map takes a (minimal exact) level-$\delta$ limit linear series $(L;V^{(i)},i\in\Delta)$ on $X$ with numerical data $N$ to the continuous linear series $(L,\V)$ along $\mu^{(\delta)}$, where $\V|_{E_i^{(\delta)}}$ is taken to $V^{(i)}$ under the choice of an isomorphism $L(\mu^{(\delta)})|_{X\times E_i^{(\delta)}}\xrightarrow{\cong} L^{(i)}$ for each $i\in\Delta$. Each nonempty fiber of $\Psi_\delta^N$ is an equivalence class of level-$\delta$ limit linear series, and vice-versa. Finally, the images of all the $\Psi_\delta^N$ form a partition of the underlying set of $G_d^r(X)$. 
\end{thm}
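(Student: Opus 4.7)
The plan is to define $\Psi_\delta^N$ first on $k$-points, then upgrade it via Theorem~\ref{funct-equiv}, and finally invert the construction. Given $\mathfrak g=(L;V^{(i)},i\in\Delta)\in G^r_{d,\delta}(X;N)(k)$, I would produce the continuous linear series $(L,\V)$ along $\mu^{(\delta)}\:T^{(\delta)}\to T$ as follows. A direct check from the construction of $\F$ in Subsection~\ref{sec:twisters} yields, for each $i\in\Delta$, a canonical isomorphism $L(\mu^{(\delta)})|_{X\times E^{(\delta)}_i}\cong L^{(i)}$, compatible with the embeddings into $L|_Y\oplus L|_Z(dP)$. Declare $\V$ at the point $x\ast E^{(\delta)}_i$ to be $(x^{-1},1)V^{(i)}\subseteq U:=\Gamma(Y,L|_Y)\oplus\Gamma(Z,L|_Z(dP))$; this defines a $\cc$-invariant locally free rank-$(r+1)$ subsheaf of $p_{T^{(\delta)}\ast}L(\mu^{(\delta)})$ on each $(T^{(\delta)}_i)^*$. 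At the node $N^{(\delta)}_j$ separating consecutive $T^{(\delta)}_i$ and $T^{(\delta)}_j$, Proposition~\ref{hV} computes the two limits as $\iota_1^{-1}(V^{(i)})\oplus\rho_2(V^{(i)})$ and $\rho_1(V^{(j)})\oplus\iota_2^{-1}(V^{(j)})$, and these agree precisely by the exactness relations~\eqref{exact2}, so $\V$ extends across every node. Minimality of $N$ translates, via the identity $m_i=\dim_k\rho_1(V^{(i)})-\dim_k\iota_1^{-1}(V^{(i)})$ extracted from Proposition~\ref{hV}, into nonfixedness of each $V^{(i)}$ for $i\in\Delta-\mathbf Z$, so $(L,\V)$ is everywhere nonconstant, and therefore continuous.

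\textbf{Families and fibers.} The recipe above is the evaluation at each $k$-point of universal data on $B:=G^r_{d,\delta}(X;N)$---the Poincar\'e sheaf on $J$, the family $\F$, and the tautological rank-$(r+1)$ subspace---so it globalizes without change to a family of continuous linear series along the constant chain-map family $\mu^{(\delta)}\times\mathrm{id}_B$. Theorem~\ref{funct-equiv} then packages it into the sought-for morphism $\Psi_\delta^N\:G^r_{d,\delta}(X;N)\to G^r_d(X)$. Replacing $(V^{(i)})$ by $((c_i^{-1},1)V^{(i)})$ for $(c_i)\in\cc^\Delta$ merely shifts each base point $E^{(\delta)}_i$ along its component $T^{(\delta)}_i$, so the output changes only by an isomorphism of chain maps and is therefore equivalent in the sense of Definition~\ref{tglsequiv}; conversely, any such equivalence must restrict at the $E^{(\delta)}_i$ to an element of $\cc^\Delta$. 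Thus each nonempty fiber of $\Psi_\delta^N$ is exactly one $\cc^\Delta$-equivalence class.

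\textbf{Partition and main obstacle.} For the partition, I would start with $(L,\V)\in G^r_d(X)(k)$ along a chain map $\gamma\:S\to T$. By Theorem~\ref{chainmaps}, $S$ is a chain of rational curves and $\gamma$ a chain map, so setting $\delta_\ell:=1+\#\{\text{components of }S\text{ collapsed by }\gamma\text{ to }N_\ell\}$ for $\ell=1,\dots,d$ identifies $S$ with $T^{(\delta)}$ as a chain map to $T$, uniquely up to the $\cc^\Delta$-action on the components mapped to nodes. Setting $V^{(i)}:=\V|_{E^{(\delta)}_i}\subseteq\Gamma(X,L^{(i)})$, invariance of $(L,\V)$ combined with Proposition~\ref{hV} at every node enforces the exactness relations~\eqref{exact2}; everywhere nonconstancy on each collapsed component forces nonfixedness of $V^{(i)}$ for $i\in\Delta-\mathbf Z$, giving minimality through the same degree formula as before; and Proposition~\ref{hVV} confirms that the transversality observed at the nodes is compatible with this exactness. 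The outcome is an exact minimal level-$\delta$ limit linear series with some numerical data $N$ whose image under $\Psi_\delta^N$ is the original $(L,\V)$. I expect the main difficulty to lie precisely here: canonically extracting $\delta$ and $N$ from $\gamma$, reconciling the combinatorics of $\Delta(\delta)$ with the component structure of $S$, and verifying that the invariance/nonconstancy conditions on $\V$ really do translate into exactness and minimality of $N$. Combined with the fiber description from the previous paragraph, this completes the partition statement.
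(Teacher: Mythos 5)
Your overall strategy --- sweep each $V^{(i)}$ along the $\cc$-orbit through $E_i^{(\delta)}$ to get $\V$ on $(T_i^{(\delta)})^*$, glue across nodes by matching the two limits computed in Proposition~\ref{hV} against the exactness relations~\eqref{exact2}, read minimality as everywhere nonconstancy via the identity $m_i=\dim_k\rho_1(V^{(i)})-\dim_k\iota_1^{-1}(V^{(i)})$, describe the fibers as $\cc^\Delta$-orbits, and invert the construction by restricting $\V$ at chosen smooth points of $S$ --- is exactly the paper's, and the pointwise part of your argument is correct.

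The one substantive gap is the sentence claiming the construction ``globalizes without change'' to a family over $B=G^r_{d,\delta}(X;N)$; that globalization is where most of the paper's proof actually lives, and it is not automatic. Three things must be checked over a general base that are invisible at the level of $k$-points. First, the sheaves $\rho_\ell(\V^{(i)})$ and $\iota_\ell^{-1}(\V^{(i)})$ must be locally free of constant rank; this is precisely why the morphism is only defined stratum by stratum, with the numerical data $N$ fixed --- a hypothesis your write-up states but never uses. Second, one needs an actual interpolating subsheaf $\widetilde\V^{(i)}\subseteq p_i^*(\mathcal A_1^{(i)}\oplus\mathcal A_2^{(i)})$ with locally free quotient over each $T_i^{(\delta)}\times B$, not just a prescription of its fibers; the paper builds it as the kernel of an explicit surjection assembled from the tautological sequence on $\mathbf P^1$ and the isomorphism $\zeta^{(i)}\colon\mathcal A_1^{(i)}/\mathcal B_1^{(i)}\to\mathcal A_2^{(i)}/\mathcal B_2^{(i)}$. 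Third, one must verify that the glued $\widetilde\V$ lands inside $\widetilde p_*(\L(\mu^{(\delta)}))$ and not merely inside the ambient bundle $p^*\W$; the paper does this by showing the composition with restriction to an auxiliary ample divisor $D$ vanishes on a dense open subscheme, hence everywhere. A minor further point: the identification $L(\mu^{(\delta)})|_{X\times E_i^{(\delta)}}\cong L^{(i)}$ is canonical only for $i\in\mathbf Z$; for $i\in\Delta-\mathbf Z$ the target is decomposable and the isomorphism involves a genuine choice, which is exactly why the fibers of $\Psi_\delta^N$ are full $\cc^\Delta$-orbits rather than points. With these points supplied, your argument coincides with the paper's.
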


(If $\delta=\mathbf 1$, then $\mu^{(\delta)}=1_T$, all the $L^{(i)}$ are invertible, and thus the $\V|_{E_i}$ do not depend on the choice of an isomorphism $L(1_T)|_{X\times E_i}\cong L^{(i)}$. Also, $\Psi^N_{\mathbf 1}$ is injective.)


\begin{proof} Put $\Delta=\Delta(\delta)$. Consider a family of level-$\delta$ limit linear series on $X$ of degree $d$, rank $r$ and numerical data $N$ over a scheme $B$. 
The family is given by an invertible sheaf $\L$ of relative bidegree $(d,0)$ on $X\times B/B$ and a family of generalized linear series $(\L^{(i)},\mathcal V^{(i)})$ on $X\times B/B$, where $\L^{(i)}:=\L(\mu^{(\delta)})|_{X\times E_i^{(\delta)}\times B}$ under the usual identification of $E_i^{(\delta)}\times B$ with $B$, for each $i\in\Delta$. The data $(\L;\V^{(i)},i\in\Delta)$ satisfies certain conditions, to be explained next. 

Let $D$ be an effective divisor of the smooth locus of $X$. Put
\[
\mathcal W_1:=p_{B*}\big(\L|_{Y\times B}((D\cap Y)\times B)\big)
\quad\text{and}\quad
\mathcal W_2:=p_{B*}\big(\L|_{Z\times B}(dP\times B+(D\cap Z)\times B)\big),
\]
and let $\W:=\W_1\oplus\W_2$, where $p_B\:X\times B\to B$ is the projection. Assume $D$ is ample enough that the $\mathcal W_i$ be locally free with formation commuting with base change, and the natural maps $\V^{(i)}\to\mathcal W_1\oplus\mathcal W_2$ be injective with locally free quotients. 

Let $\rho_\ell\:\W_1\oplus\W_2\to\W_\ell$ and $\iota_\ell\:\W_\ell\to\W_1\oplus\W_2$ be the natural maps for $\ell=1,2$. 
The conditions $(\L;\V^{(i)},i\in\Delta)$ satisfies are three. First, the map $\V^{(i)}\to \mathcal W_\ell$ induced by $\rho_\ell$ has locally free cokernel for each $i\in\Delta$ and $\ell=1,2$. It follows that 
$\mathcal A^{(i)}_\ell:=\rho_\ell(\V^{(i)})$ and 
$\mathcal B^{(i)}_\ell:=\iota_{\ell}^{-1}(\V^{(i)})$ 
are locally free. Their ranks are specified by $N$, the second condition.
Notice that
\[
\mathcal B_1^{(i)}\oplus\mathcal B_2^{(i)}\subseteq
\V^{(i)}
\subseteq\mathcal A_1^{(i)}\oplus\mathcal A_2^{(i)}
\subseteq \mathcal W_1\oplus\mathcal W_2,
\]
and the induced map
\[
\lambda^{(i)}_\ell\:\frac{\V^{(i)}}{\mathcal B_1^{(i)}\oplus\mathcal B_2^{(i)}}\xrightarrow{\quad\cong\quad}\frac{\mathcal A_\ell^{(i)}}{\mathcal B_\ell^{(i)}}
\]
is an isomorphism for each $\ell=1,2$. Composing the inverse of $\lambda^{(i)}_1$ with $\lambda^{(i)}_2$ gives us an isomorphism
\[
\zeta^{(i)}\: \frac{\mathcal A_1^{(i)}}{\mathcal B_1^{(i)}}\xrightarrow{\quad\cong\quad}\frac{\mathcal A_2^{(i)}}{\mathcal B_2^{(i)}}.
\]
Finally, $\mathcal A^{(i)}_2=\mathcal B^{(j)}_2$ and $\mathcal A^{(j)}_1=\mathcal B^{(i)}_1$ 
for each consecutive $i,j\in\Delta$, by the exactness of $N$, the third condition.

For each $i\in\Delta$, let $p_i\: T^{(\delta)}_i\times B\to B$ and $q_i\:T^{(\delta)}_i\times B\to T^{(\delta)}_i$ be the projections, and let $\widetilde\V^{(i)}\subseteq p_i^*(\mathcal A_1^{(i)}\oplus\mathcal A_2^{(i)})$ be the subsheaf whose quotient is the cokernel of the composition 
\[
q_i^*\mathcal O_{T^{(\delta)}_i}(-1)\bigotimes\frac{p_i^*\mathcal A_1^{(i)}}{p_i^*\mathcal B_1^{(i)}} \xrightarrow{\quad q_i^*\xi_i\otimes 1\quad}
\frac{p_i^*\mathcal A_1^{(i)}}{p_i^*\mathcal B_1^{(i)}}\bigoplus\frac{p_i^*\mathcal A_1^{(i)}}{p_i^*\mathcal B_1^{(i)}} \xrightarrow{\quad 1\oplus p_i^*\zeta^{(i)}\quad}
\frac{p_i^*\mathcal A_1^{(i)}}{p_i^*\mathcal B_1^{(i)}}\bigoplus\frac{p_i^*\mathcal A_2^{(i)}}{p_i^*\mathcal B_2^{(i)}},
\]
where $\xi_i$ is the inclusion $\mathcal O_{T^{(\delta)}_i}(-1)\hookrightarrow\mathcal O_{T^{(\delta)}_i}^2$ of the tautological subsheaf associated to an isomorphism $T^{(\delta)}_i\xrightarrow{\cong}\mathbf P^1_k$ such that 
\begin{equation}\label{cvary}
\mathcal O_{T^{(\delta)}_i}(-1)\big|_{c\ast E_i^{(\delta)}}=(c^{-1},1)\mathcal O_{T^{(\delta)}_i}(-1)\big|_{E_i^{(\delta)}}\quad\text{for each }c\in\cc(k).
\end{equation}
Then $\widetilde\V^{(i)}\subseteq p_i^*\W$ with locally free quotient, and satisfies
\begin{equation}\label{eq:VtildeV}
\widetilde\V^{(i)}\big|_{c\ast E_i^{(\delta)}}=(c^{-1},1)p_1^*\V^{(i)}\big|_{E_i^{(\delta)}}\quad\text{for each }c\in\cc(k).
\end{equation}

Moreover, there is a subsheaf $\widetilde\V\subseteq p^*\W$ with locally free quotient such that $\widetilde\V|_{T^{(\delta)}_i\times B}=\widetilde\V^{(i)}$ for each $i\in\Delta$, where $p\:T^{(\delta)}\times B\to B$ is the projection, because 
\[
\widetilde\V^{(i)}|_{N^{(\delta)}_j\times B}=\mathcal B_1^{(i)}\oplus\mathcal A_2^{(i)}=\mathcal A_1^{(j)}\oplus\mathcal B_2^{(j)}=\widetilde\V^{(j)}|_{N^{(\delta)}_j\times B}
\]
for each consecutive $i,j\in\Delta$.

Finally, $\widetilde\V\subseteq\widetilde p_*(\L(\mu^{(\delta)}))$, where $\widetilde p\: X\times T^{(\delta)}\times B\to T^{(\delta)}\times B$ is the projection. Indeed, 
\[
\widetilde p_*\big(\L(\mu^{(\delta)})\big)\subseteq\widetilde p_*\big(\L(D\times B)(\mu^{(\delta)})\big)\subseteq p^*\W
\]
with locally free quotients, if $D$ is sufficiently ample. Since $\V^{(i)}\subseteq p_{B*}\L^{(i)}$, it follows from \eqref{eq:VtildeV} that 
$\widetilde\V\subseteq\widetilde p_*(\L(D\times B)(\mu^{(\delta)}))$ over a dense open subscheme of $T^{(\delta)}\times B$, and then over the whole scheme. Also, the composition 
\[
\widetilde\V\longrightarrow\widetilde p_*\big(\L(D\times B)(\mu^{(\delta)})\big)\longrightarrow
\widetilde p_*\big(\L(D\times B)(\mu^{(\delta)})\big|_{D\times T^{(\delta)}\times B}\big)
\]
is zero because it restricts to zero on a dense open subscheme of $T^{(\delta)}\times B$, thus showing that $\widetilde\V\subseteq\widetilde p_*(\L(\mu^{(\delta)}))$. 

Thus we get a family of twisted generalized linear series $(\L,\widetilde\V)$ of degree $d$ and rank $r$ on $X$ along $\mu^{(\delta)}\times 1_B$. From the construction, $(\L,\widetilde\V)$ is clearly everywhere invariant. Furthermore, it is everywhere nonconstant because $N$ is minimal. So
$(\L,\widetilde\V)$ is a family of continuous linear series along the family of chain maps $1_B\times\mu^{(\delta)}$. It is clear that the association of $(\L,\widetilde\V)$ to $(\L;\V^{(i)},i\in\Delta)$ commutes with base change, thus defining the morphism 
$\Psi_\delta^N$ in the statement of the theorem.

The action by $\cc^{\Delta}$ on $G^{r,*}_{d,\delta}(X;N)$ changes $(\L,\widetilde\V)$ to an equivalent continuous linear series, whence 
$\Psi_\delta^N$ is $\cc^{\Delta}$-invariant. Furthermore, the equivalence class of $(\L;\V^{(i)},i\in\Delta)$ can be recovered from $(\L,\widetilde\V)$, as 
$\L(\mu^{(\delta)})|_{X\times E_i^{(\delta)}\times B}\cong\L^{(i)}$ for each $i\in\Delta$ and, under a certain choice of isomorphism, $\V^{(i)}$ is the image of $\widetilde\V|_{E^{(\delta)}_i\times B}$. It follows that each nonempty fiber of $\Psi_\delta^N$ is an equivalence class of 
level-$\delta$ limit linear series, as stated in the theorem.

Finally, let $(L,\V)$ be a continuous linear series of degree $d$ and rank $r$ on $X$ along a chain map $\mu\:S\to T$. For each $i=1,\dots,d$, let 
$\delta_i$ be the number of irreducible components of $\mu^{-1}(N_i)$. 
Put $\delta:=(\delta_1,\dots,\delta_d)$ and $\Delta:=\Delta(\delta)$. 
There is a unique isomorphism between the poset of irreducible components of $S$ and $\Delta$. For each $i\in\Delta$, let $S_i$ be the corresponding component. Also, let $N_j:=S_i\cap S_j$ for each consecutive $i,j\in\Delta$. 

For each $i\in\Delta$, fix a point $E'_i\in S^*_i$. Put 
$V^{(i)}:=\V|_{E'_i}$ and 
$L^{(i)}:=L(\mu)|_{X\times E'_i}$. Then 
$(L^{(i)},V^{(i)})$ is a linear series on $X$.

Observe that the $L^{(i)}$ form the twist $\delta$-sequence associated 
to $L$. Also, let $U_1:=\Gamma(Y,L|_Y)$ and 
$U_2:=\Gamma(Z,L|_Z(dP))$, and put $U:=U_1\oplus U_2$. Since $(L,\V)$ is everywhere invariant,
$$
\V_{N_j}=\lim_{c\to\infty}(c^{-1},1)\V|_{E'_i}=
\lim_{c\to 0}(c^{-1},1)\V|_{E'_j}
$$
as subspaces of $U$ for each 
consecutive $i,j\in\Delta$. But Proposition~\ref{hV} yields
$$
\lim_{c\to\infty}(c^{-1},1)\V|_{E'_i}=
\iota_1^{-1}(\V|_{E'_i})\oplus\rho_2(\V_{E'_i})
\text{\  \ and \  \ }
\lim_{c\to 0}(c^{-1},1)\V|_{E'_j}=
\rho_1(\V|_{E'_j})\oplus\iota_2^{-1}(\V|_{E'_j}),
$$
where $\iota_\ell\: U_\ell\to U$ is the natural injection and 
$\rho_\ell\: U\to U_\ell$ is the natural surjection for $\ell=1,2$. 
Thus the $V^{(i)}$ satisfy Conditions~\eqref{exact2}, 
and hence $(L,\V^{(i)},i\in\Delta)$ is an exact 
level-$\delta$ limit linear series on $X$. Finally, since $(L,\V)$ is everywhere nonconstant, $(L,\V^{(i)},i\in\Delta)$ is minimal, and is thus parameterized by a point on $G^{r}_{d,\delta}(X;N)$ for a unique exact, minimal $N\:\Delta\to\mathbf Z^3$. 

Since $(L,\V)$ is everywhere invariant, and since \eqref{cvary} holds for each $i\in\Delta$, it follows that there is a $T$-isomorphism $T^\Delta\to S$ taking $E_i^\Delta$ to $E'_i$ for each $i\in\Delta$ and inducing an equivalence between $(L,\widetilde\V)$, the continuous linear series corresponding to $(L,\V^{(i)},i\in\Delta)$ under $\Psi_\delta^N$, and $(L,\V)$. 
\end{proof}

\begin{prop}\label{toOss} There is a natural $J$-map $\Phi\: G^r_d(X)\to G^r_{d,\mathbf 1}(X)$ such that $\Phi\Psi_{\mathbf 1}^N$ is the open embedding of $G^{r}_{d,\mathbf 1}(X;N)$ in $G^r_{d,\mathbf 1}(X)$ for each exact, minimal $N$.
\end{prop}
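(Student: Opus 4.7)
The plan is to use Theorem~\ref{funct-equiv} and define $\Phi$ as a natural transformation from $\mathfrak{G}_d^r(X)$ to the functor of families of level-$\mathbf 1$ limit linear series up to equivalence. Given a family of continuous linear series $(\L,\V)$ along $\gamma=(\gamma_1,\gamma_2)\:\mathcal S\to B\times T$, for each $i=0,\dots,d$ I will construct a section $\sigma_i\:B\to\mathcal S$ of $\gamma_1$ with $\gamma_2\sigma_i=E_i$, and then set
\[
\L^{(i)}:=(1_X\times\sigma_i)^*\L(\gamma)\cong\L\otimes p_1^*\O_X^{(i)}\quad\text{and}\quad\V^{(i)}:=\sigma_i^*\V,
\]
using $\F|_{X\times E_i}=\O_X^{(i)}$. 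The triple $(\L;\V^{(i)},i=0,\dots,d)$ will be the level-$\mathbf 1$ limit linear series assigned to $(\L,\V)$; since $\L^{(0)}=\L$, the resulting morphism $\Phi$ is automatically over $J$.

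To produce the sections, I set $\mathcal S_i:=\gamma^{-1}(B\times E_i)$. Fiberwise over $b\in B(k)$, the first claim in the proof of Theorem~\ref{chainmaps} shows that $\gamma_{2,b}$ restricts to an isomorphism on the unique component of $\mathcal S_b$ mapping onto $T_i$ and collapses every other component to a special point distinct from $E_i$, so $\mathcal S_{i,b}$ is a single reduced point. Thus $\mathcal S_i\to B$ is proper and quasi-finite, hence finite, with fibers of length $1$. Since $E_i$ lies in the smooth locus of $T$, \'etale-locally around each point of $\mathcal S_i$ the map $\gamma_2$ is a trivial fibration onto a neighborhood of $E_i$ in $T_i$; this will show that $\mathcal S_i\to B$ is flat, hence an isomorphism, and its inverse is $\sigma_i$. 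Rigorously verifying this flatness in families (without reducedness of $B$) is the main obstacle of the plan.

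Next I will check the compatibility conditions~\eqref{compatible} for $(\L;\V^{(i)},i=0,\dots,d)$ at consecutive integers $i,i+1$ fiberwise. Let $b\in B(k)$ and let $\delta$ be the tuple for which $\mathcal S_b$ has $\delta_\ell$ components collapsing to $N_\ell$ under $\gamma_{2,b}$, for $\ell=1,\dots,d$. The final paragraph of the proof of Theorem~\ref{allevels}, using the invariance relation~\eqref{QcQ} and Proposition~\ref{hV}, shows that the full family $(L;V^{(a)},a\in\Delta(\delta))$ extracted by restricting $\V$ to the $E_a^{(\delta)}$ is an \emph{exact} level-$\delta$ limit linear series; that is, the equalities~\eqref{exact2} hold for each consecutive $a<a'$ in $\Delta(\delta)$. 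Chaining these equalities through the integer-indexed subfamily via the trivial inclusions $\iota_j^{-1}(V)\subseteq\rho_j(V)$ for $j=1,2$, one obtains $\rho_2(V^{(i)})\subseteq\iota_2^{-1}(V^{(i+1)})$ and $\rho_1(V^{(i+1)})\subseteq\iota_1^{-1}(V^{(i)})$, which is~\eqref{compatible2} in $\Delta(\mathbf 1)$ and hence~\eqref{compatible}. Functoriality in $B$ and invariance under equivalence in the sense of Definition~\ref{tglsequiv} are then routine, so $\Phi$ is well-defined.

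For the second assertion, I will use that $\Psi_{\mathbf 1}^N$ was built in Theorem~\ref{allevels} precisely so that the associated $\widetilde\V$ satisfies $\widetilde\V|_{E_i\times B}\cong V^{(i)}$; hence $\Phi\Psi_{\mathbf 1}^N$ returns $(L;V^{(i)},i=0,\dots,d)$, i.e., is the natural inclusion of $G^r_{d,\mathbf 1}(X;N)$ into $G^r_{d,\mathbf 1}(X)$. Since $\Delta(\mathbf 1)\subseteq\mathbf Z$, minimality is vacuous, and the decomposition~\eqref{estrata} together with the openness of exactness in $G^r_{d,\mathbf 1}(X)$ shows that $G^r_{d,\mathbf 1}(X;N)$ is an open subscheme for each exact $N$. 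Therefore $\Phi\Psi_{\mathbf 1}^N$ is the desired open embedding. Apart from the flatness question above, the remaining steps are a functorial repackaging of arguments already carried out in Theorems~\ref{chainmaps} and \ref{allevels}.
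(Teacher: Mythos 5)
Your proposal is essentially the paper's own proof: restrict $(\L(\gamma),\V)$ over the locus above $E_i$ to produce $(\L^{(i)},\V^{(i)})$, deduce the compatibilities \eqref{compatible} from the invariance/exactness argument at the end of the proof of Theorem~\ref{allevels} (the paper defers to that same argument; your explicit chaining of the equalities \eqref{exact2} through the inclusions $\iota_j^{-1}(V)\subseteq\rho_j(V)$ is precisely the step it leaves implicit), and read off the statement about $\Phi\Psi_{\mathbf 1}^N$ from the construction of $\Psi_{\mathbf 1}^N$.

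The one point you leave open --- that $\mathcal S_i:=\gamma_2^{-1}(B\times E_i)\to B$ is an isomorphism even over a nonreduced base --- is not a genuine obstacle. A local parameter $t$ at $E_i$ on $T_i$ pulls back to a function on $\mathcal S$ near $\mathcal S_i$ whose restriction to each fiber $\mathcal S_b$ is a non-zero-divisor: the fibers are reduced curves, and the pullback of $t$ to $\mathcal S_b$ vanishes only at the single point over $E_i$, which lies on the unique component of $\mathcal S_b$ mapped isomorphically onto $T_i$, all other components being collapsed to special points. Since $\gamma_1$ is flat, the slicing criterion for flatness shows $\mathcal S_i$ is $B$-flat; being moreover finite with fibers of length one, $\gamma_{1*}\mathcal O_{\mathcal S_i}$ is invertible and the unit map $\mathcal O_B\to\gamma_{1*}\mathcal O_{\mathcal S_i}$ is surjective by Nakayama, hence an isomorphism, so $\mathcal S_i\cong B$. (The paper simply asserts this isomorphism without comment.) With that filled in, your argument coincides with the printed proof.
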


\begin{proof} Let $(\L,\V)$ be a family of continuous linear series along a family of chain maps $\gamma\:\mathcal S\to T\times B$. Then $\gamma$ restricts to an isomorphism $\gamma^{-1}(E_i\times B)\to E_i\times B$ for each $i=0,\dots,d$. Under the natural identification of $E_i\times B$ with $B$, the restriction of 
$\L(\gamma)$ over $\gamma^{-1}(E_i\times B)$ is thus isomorphic to
$\L^{(i)}:=\L\otimes p_1^*\mathcal O_X^{(i)}$, where $p_1\:X\times B\to X$ is the projection, and the restriction of $\V$ to $E_i\times B$ is 
thus isomorphic to a subsheaf $\V^{(i)}$ of $p_{2*}\L^{(i)}$ such that 
$(\L^{(i)},\V^{(i)})$ is a family of linear series on $X\times B/B$, where $p_2\:X\times B\to B$ is the projection. The compatibility of the $\V^{(i)}$, that is, the fact that the image of $\V^{(i)}$ under the map induced by the natural map $\mathcal O_X^{(i)}\to\mathcal O_X^{(i\pm 1)}$ is contained in
$\V^{(i\pm 1)}$ for each $i$, follows from the argument given in the proof of the last statement of Theorem~\ref{allevels}. Thus we have a $B$-point of $G^{r}_{d,\mathbf 1}(X)$. The association commutes with base change, whence we obtain the $J$-map $\Phi$. Its property is a consequence of the construction of $\Psi_{\mathbf 1}^N$ in the proof of Theorem~\ref{allevels}.
\end{proof}

\bibliographystyle{plain}

\end{document}